\begin{document}

\title{On the spectrum $\bo \sm \tmf$}
\author{Scott M. Bailey}
\maketitle

\begin{abstract}
M. Mahowald, in his work on $\bo$-resolutions, constructed a $\bo$-module splitting of the spectrum $\bo \sm \bo$ into a wedge of summands related to integral Brown-Gitler spectra.  In this paper, a similar splitting of $\bo \sm \tmf$ is constructed.  This splitting is then used to understand the $\bo_*$-algebra structure of $\bo_* \tmf$ and allows for a description of $\bo^* \tmf$. 
\end{abstract}
%

\section{Introduction}
All cohomology groups are assumed to have coefficients in $\FF_2$ and all spectra completed at the prime 2 unless stated otherwise.  Let $\SA$ denote the Steenrod algebra, and $\SA(n)$ the subalgebra generated by $\{ \Sq{1},\Sq{2},\cdots, \Sq{2^n}\}$.  Consider the Hopf algebra quotient $\SA//\SA(n) = \SA \otimes_{\SA(n)} \FF_2$.  Here the right action of $\SA(n)$ on $\SA$ is induced by the inclusion and the left action on $\FF_2$ by the augmentation.  Algebraically, one can consider the subsequent surjections
\[
	\SA \to \SA//\SA(0) \to \SA//\SA(1) \to \SA//\SA(2) \to \SA//\SA(3) \to \cdots
\]
and ask whether each algebra can be realized as the cohomology of some spectrum.  The case $n \geq 3$ requires the existance of a non-trivial map $S^{2^{n+1}-1} \to S^0$ which cannot occur due to Hopf invariant one.  For $n < 3$, however, it is now well-known that each algebra can indeed be realized by the cohomology of some spectrum:
\[
	H^* \es\FF_2 \to H^* \es\ZZ \to H^*\bo \to H^* \tmf
\]
There are maps realizing the above homomorphisms of cohomology groups
\[
\tmf \to \bo \to \es \ZZ \to \es \FF_2
\]
In particular, the spectrum $\tmf$ is at the top of a ``tower'' whose ``lower floors'' have been well studied in the literature, culminating with Mahowald's~\cite{Mah81} understanding of the spectrum $\bo \sm \bo$ and Carlsson's~\cite{Carls76} description of the cohomology operations $[\bo, \bo]$.  More difficult questions arise:  What is the structure of $\tmf \sm \tmf$?  What are the stable cohomology operations of $\tmf$?

We would like to understand the spectrum $\bo \sm \tmf$ for a variety of reasons.  First, it might serve as a nice intermediate step towards understanding the spectrum $\tmf \sm \tmf$.  Furthermore, determining its structure comes with an added bonus of understanding operations $[\tmf,\bo]$ which may provide some insight into understanding the cohomology operations of $\tmf$.  Second, the splitting of $\bo \sm \tmf$ has been instrumental to the author in demonstrating the splitting of the Tate spectrum of $\tmf$ into a wedge of suspensions of $\bo$.

Let $\BHZ{j}$ denote the $j^\textnormal{th}$ integral Brown-Gitler spectrum, whose homology will be described as a submodule of $H_* \es \ZZ$.  Such spectra have been studied extensively in the literature (see \cite{CDGM88},~\cite{GMJ86},~\cite{Shim84}, for example).  In particular, Mahowald~\cite{Mah81} demonstrated the splitting of $\bo$-module spectra $\bo \sm \bo \simeq \bigvee_{j \geq 0} \sus{4j} \bo \sm \BHZ{j}$.  Let $\Omega = \bigvee_{0 \leq j \leq i} \sus{8i + 4j} \BHZ{j}$.  The main theorem of this paper is the following
\begin{theorem}\label{t:main-theorem}
There is a homotopy equivalence of $\bo$-module spectra
\begin{equation}
\bo \sm \Omega \to \bo \sm \tmf
\end{equation}
\end{theorem}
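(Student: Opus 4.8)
The plan is to imitate Mahowald's proof of the splitting of $\bo\sm\bo$. Since $\bo\sm\tmf$ and $\bo\sm\Omega$ are both connective, of finite type over $\FF_2$, and $2$-complete, it suffices to produce a map of $\bo$-module spectra $\bo\sm\Omega\to\bo\sm\tmf$ inducing an isomorphism on mod $2$ homology; any such map is then automatically a homotopy equivalence. By the free--forgetful adjunction for $\bo$-modules, such a $\bo$-module map is the same datum as a map of spectra $\Omega\to\bo\sm\tmf$, that is, a family of maps $f_{i,j}\colon\sus{8i+4j}\BHZ{j}\to\bo\sm\tmf$ for $0\le j\le i$.

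The homology condition can be made algebraic. By the K\"unneth theorem and the shearing isomorphism $\SA//\SA(1)\otimes M\cong\SA\otimes_{\SA(1)}M$ applied to $M=H^*\tmf=\SA//\SA(2)$ (with $\SA(1)$ acting by restriction of the $\SA$-action), one has $H^*(\bo\sm\tmf)\cong\SA\otimes_{\SA(1)}\SA//\SA(2)$, and likewise $H^*(\bo\sm\Omega)\cong\SA\otimes_{\SA(1)}\bigl(\bigoplus_{0\le j\le i}\sus{8i+4j}H^*\BHZ{j}\bigr)$. Since $\SA$ is free over $\SA(1)$, a $\bo$-module map $\bo\sm\Omega\to\bo\sm\tmf$ is a homology isomorphism precisely when the induced map of $\SA(1)$-modules on the smash factors is an isomorphism; thus the crux is the purely algebraic statement that, as $\SA(1)$-modules,
\[
\SA//\SA(2)\;\cong\;\bigoplus_{0\le j\le i}\sus{8i+4j}H^*\BHZ{j}.
\]
I would prove this by passing to homology and exploiting the weight filtration on $H_*\tmf$, viewed as a sub-Hopf-algebra of $H_*\es\ZZ$: filter $H_*\tmf$ by weight, identify the associated graded with shifted copies of the Brown--Gitler comodules $H_*\BHZ{j}$, and then verify---by a dimension count in each internal degree together with an inspection of the $\SA(1)_*$-coaction---that the filtration splits with exactly the summands indexed by the pairs $(i,j)$ with $0\le j\le i$, placed in degree $8i+4j$. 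This is the analogue, one step further up the tower, of Mahowald's comodule splitting of $H_*\bo$.

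It remains to realize this splitting by an actual map, i.e.\ to construct the $f_{i,j}$. Here I would use the structure maps $\BHZ{j}\to\BHZ{j+1}$, the maps $\BHZ{j}\to\es\ZZ$, and the pairings $\BHZ{a}\sm\BHZ{b}\to\BHZ{a+b}$ lifting the multiplication of $H_*\es\ZZ$; Mahowald's (multiplicative) equivalence $\bo\sm\bo\simeq\bigvee_{k\ge0}\sus{4k}\bo\sm\BHZ{k}$; and the commutative-$\bo$-algebra structure of $\bo\sm\tmf$ together with the tower $\tmf\to\bo\to\es\ZZ\to\es\FF_2$. For each $(i,j)$ one assembles $f_{i,j}$ as a composite that factors $\sus{8i+4j}\BHZ{j}$ through a product of smaller Brown--Gitler spectra, routed into the $\bo$- and $\tmf$-smash factors so that on homology $f_{i,j}$ hits the corresponding summand of the decomposition above. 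Equivalently, in the Adams spectral sequence converging to $[\Omega,\bo\sm\tmf]_*$---whose $E_2$-term is, by the change-of-rings isomorphism above, $\mathrm{Ext}_{\SA(1)}\bigl(\SA//\SA(2),\bigoplus_{0\le j\le i}\sus{8i+4j}H^*\BHZ{j}\bigr)$---the algebraic isomorphism just displayed defines a class in filtration $0$, and one shows (by Mahowald's argument) that it survives to $E_\infty$; the map it detects is the desired equivalence by the first paragraph. The main obstacle is exactly this last step: the $E_2$-level splitting of $\SA//\SA(2)$ over $\SA(1)$ is a finite if intricate computation, but upgrading it to an honest $\bo$-module map requires either writing down the $f_{i,j}$ coherently over all pairs $(i,j)$ with $j\le i$ using the Brown--Gitler machinery, or an obstruction argument showing that no Adams differential carries the filtration-$0$ class off; a subsidiary chore is matching the suspension degrees $8i+4j$ and the range $0\le j\le i$ against the weight filtration so that no summand is double-counted or omitted.
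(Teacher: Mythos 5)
Your first half --- the identification of $H^*\tmf$ as an $\SA(1)$-module with $\bigoplus_{0\le j\le i}\sus{8i+4j}H^*\BHZ{j}$ via the weight filtration on $H_*\tmf\subset H_*\es\ZZ$ --- is exactly what the paper does in Section 2 (there the splitting is exhibited by the weight decomposition $H_*\tmf\cong\bigoplus N^{\tmf}_{8i}$ and the ``Verschiebung'' $V$ carrying each weight piece onto an integral Brown--Gitler module), and your reduction of the homotopy equivalence to a homology isomorphism of $\bo$-module maps is also the right frame. The gap is in the second half, which you flag as ``the main obstacle'' but then leave at the level of two possible strategies without identifying where either one actually breaks. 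The specific difficulty is this: your plan to build each $f_{i,j}$ by ``factoring $\sus{8i+4j}\BHZ{j}$ through a product of smaller Brown--Gitler spectra'' fails precisely when $j$ is a power of $2$, because the pairing $\BHZ{2^{i-1}}\sm\BHZ{2^{i-1}}\to\BHZ{2^i}$ is \emph{not} surjective in homology --- the class $\zeta_{i+3}\in H_*\BHZ{2^i}$ is indecomposable --- so no composite of pairings of smaller pieces can hit all of $H_*\BHZ{2^i}$, and the inductive construction stalls at every power of $2$.

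The paper's proof is an induction on the filtration $\Omega^n=\bigvee_{j\le n}\bigvee_i\sus{8i}B(j)$ in which the non-power-of-$2$ stages go exactly as you suggest (pairings plus the $\mathcal{E}_\infty$-structure of $\bo\sm\tmf$, Lemma~\ref{l:extendmap} and Corollary~\ref{c:extendmap}), but the stage $j=2^i$ requires new input: Davis's cofibration $\bo\sm F_n\xrightarrow{\delta}\bo\sm\BHZ{n}\sm\BHZ{n}\to\bo\sm\BHZ{2n}$ (mod wedges of $\es\FF_2$), with $F_n=\sus{8n-5}\MM\sm\BHZ{1}$, reduces the existence of $g_{2^i}$ to showing the composite $\mathfrak{m}_{i-1}\delta$ is null. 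That vanishing is not formal: after using sparseness of $\pi_*(\bo\sm\tmf)$ to reduce to a single class in degree $2^{i+4}-4$, one must show its image is divisible by $2$, which the paper gets by factoring the multiplication through the quadratic construction $D_2(B(2^{i-1}))$ and invoking Davis's computation there (Theorem~\ref{t:main-reduction}). Without this divisibility argument --- or some substitute for it --- your filtration-$0$ Adams class is not known to survive, and the map realizing the algebraic splitting is not constructed. So the proposal correctly lays out the architecture but is missing the one genuinely non-formal step of the proof.
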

The splitting is analogous to that of $\bo \sm \bo$ of Mahowald and even $M{\rm O}\langle 8 \rangle \sm \bo$ of Davis~\cite{Davis83}.  Its proof, therefore, contains ideas and results from both.  Section 2 deals with demonstrating an isomorphism on the level of homotopy groups, which first requires an understanding of the left $\SA(1)$-module structure of $H^* \tmf$.  In Section 3, we construct a map of $\bo$-module spectra realizing the isomorphism of homotopy groups.  Section 4 uses this splitting along with pairings of integral Brown-Gitler spectra to explicitly determine the $\bo_*$-algebra structure of $\bo_* \tmf$ and also identifies the cohomology $\bo^* \tmf$.

\section{The algebraic splitting}

The $E_2$-term of the Adams spectral sequence converging to the homotopy groups of $\bo \sm \tmf$ is given by
\begin{equation}
\EE_{\SA}^{s,t}\left( H^* (\bo \sm \tmf) , \FF_2 \right) \Rightarrow \pi_{t-s} (\bo \sm \tmf ).
\end{equation}
The $\EE$-group appearing in the above spectral sequence can be simplified via a change-of-rings isomorphism:
\begin{equation}\label{e:bostmfASScor}
\EE_{\SA(1)}^{s,t}\left( H^* \tmf, \FF_2 \right) \Rightarrow \pi_{t-s} (\bo \sm \tmf ).
\end{equation}
Therefore, it suffices to understand the left $\SA(1)$-module structure of $H^* \tmf$.  Computations and definitions simplify upon dualizing.  Indeed, the dual Steenrod algebra, $\SA_*$, is the graded polynomial ring $\FF_2[\xi_1,\xi_2,\xi_3,\hdots]$ with $\vert \xi_i \vert = 2^i - 1$.  An equivalent problem after dualizing is determining the right $\SA(1)$-module structure of the subring $H_* \tmf \subset \SA_*$.  The homology of $\tmf$ as a right $\SA$-module is given by Rezk~\cite{Rezk512}
\begin{equation}
H_{*} \tmf \cong \FF_{2}[ \zeta_{1}^{8}, \zeta_{2}^{4}, \zeta_{3}^{2}, \zeta_{4}, \hdots].
\end{equation}
The generators $\zeta_i = \chi \xi_i$, where $\chi: \SA_* \to \SA_*$ is the canonical antiautomorphism.  Define a new weight on elements of $\SA_*$ by $\omega(\zeta_i) = 2^{i-1}$ for $i\geq 1$.  For $a,b \in \SA_*$ define the weight on their product by $\omega(ab) = \omega(a) + \omega(b)$.  Let $N_k^\tmf$ denote the $\FF_2$-vector space inside $H_* \tmf$ generated by all monomials of weight $k$ with $N_0^\tmf = \FF_2$ generated by the identity.

\begin{lemma}\label{l:tmfA2}
As right $\SA(2)$-modules, 
\[
H_* \tmf \cong \bigoplus_{i\geq 0} N^\tmf_{8i}
\]
\end{lemma}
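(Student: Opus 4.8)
The plan is to prove the lemma in its dual form, as a direct sum decomposition of $\SA(2)_*$-comodules, and then transport it back. The relevant right $\SA(2)$-module structure on $H_*\tmf \subset \SA_*$ is induced from its $\SA(2)_*$-coaction, and that coaction is the composite of the $\SA_*$-coaction $\psi\colon H_*\tmf \to \SA_* \otimes H_*\tmf$ with $\pi \otimes 1$, where $\pi\colon \SA_* \to \SA(2)_* = \SA_*/(\zeta_1^8,\zeta_2^4,\zeta_3^2,\zeta_4,\zeta_5,\hdots)$ is the quotient map. Here $\psi$ is just the restriction to the subalgebra $H_*\tmf = \FF_2[\zeta_1^8,\zeta_2^4,\zeta_3^2,\zeta_4,\hdots]$ of the conjugated Milnor coproduct $\Delta(\zeta_n) = \sum_{i=0}^n \zeta_i \otimes \zeta_{n-i}^{2^i}$, and one checks at once that $\Delta$ does land in $\SA_* \otimes H_*\tmf$ on generators (hence everywhere), since each right-hand factor $\zeta_{n-i}^{2^i}$ lies in $H_*\tmf$. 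Write $\bar\psi = (\pi \otimes 1)\circ\psi$. Every polynomial generator of $H_*\tmf$ has weight divisible by $8$ — $\omega(\zeta_1^8) = \omega(\zeta_2^4) = \omega(\zeta_3^2) = \omega(\zeta_4) = 8$ and $\omega(\zeta_n) = 2^{n-1}$ for $n \geq 4$ — so $\omega$ already exhibits $H_*\tmf$ as the direct sum $\bigoplus_{i\geq 0} N^\tmf_{8i}$ of $\FF_2$-vector spaces; it therefore suffices to show that each summand is a sub-$\SA(2)_*$-comodule, that is, $\bar\psi(N^\tmf_{8i}) \subseteq \SA(2)_* \otimes N^\tmf_{8i}$.

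Since $\bar\psi$ is a map of algebras and $\omega$ is additive on products, it is enough to verify the weight-homogeneity $\bar\psi(g) \in \SA(2)_* \otimes N^\tmf_{\omega(g)}$ on the polynomial generators $g \in \{\zeta_1^8,\zeta_2^4,\zeta_3^2,\zeta_4,\zeta_5,\hdots\}$; the case of an arbitrary monomial then follows by multiplying. For $g = \zeta_n$ with $n \geq 4$, every term $\zeta_i \otimes \zeta_{n-i}^{2^i}$ of $\Delta(\zeta_n)$ with $i \leq n-1$ has right-hand weight $2^i \cdot 2^{n-i-1} = 2^{n-1} = \omega(\zeta_n)$, and the single term whose right-hand weight differs from this is $\zeta_n \otimes 1$; but $\pi$ annihilates $\zeta_i$ for every $i \geq 4$ (in particular $i = n$), so $\bar\psi(\zeta_n) = \sum_{i=0}^{3}\zeta_i \otimes \zeta_{n-i}^{2^i} \in \SA(2)_* \otimes N^\tmf_{2^{n-1}}$. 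For $g = \zeta_j^{2^{4-j}}$ with $j \in \{1,2,3\}$ one expands $\Delta(\zeta_j)^{2^{4-j}}$ — the characteristic-$2$ Frobenius leaves only a handful of terms — observes that every term other than $\zeta_j^{2^{4-j}} \otimes 1$ has right-hand weight $8$, and discards $\zeta_j^{2^{4-j}} \otimes 1$ via $\pi(\zeta_1^8) = \pi(\zeta_2^4) = \pi(\zeta_3^2) = 0$.

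This gives $\bar\psi(N^\tmf_{8i}) \subseteq \SA(2)_* \otimes N^\tmf_{8i}$ for all $i$, so the $N^\tmf_{8i}$ are complementary sub-$\SA(2)_*$-comodules of $H_*\tmf$ and hence, the right $\SA(2)$-action being induced from the coaction, complementary right $\SA(2)$-submodules; thus $H_*\tmf \cong \bigoplus_{i\geq 0} N^\tmf_{8i}$ as right $\SA(2)$-modules, as claimed. There is no serious obstacle in the argument — it is formal bookkeeping with the Milnor coproduct and the weight $\omega$ — but the point that warrants care, and which is its structural heart, is the observation that $\SA(2)_*$ is exactly the quotient of $\SA_*$ that annihilates every polynomial generator of $H_*\tmf$; this is why the ``offending'' coproduct terms $\zeta_n \otimes 1$ and $\zeta_j^{2^{4-j}} \otimes 1$ are precisely the ones that vanish modulo $\SA(2)_*$, why the approach sees $\SA(2)$ and no larger $\SA(n)$, and it runs exactly parallel to the role of $\SA(1)$ together with $H_*\bo = \FF_2[\zeta_1^4,\zeta_2^2,\zeta_3,\zeta_4,\hdots]$ in Mahowald's splitting of $\bo \sm \bo$.
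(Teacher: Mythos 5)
Your proof is correct and is essentially the paper's argument in dual form: the paper computes the right action of the total square $\Sq{}$ on the polynomial generators of $H_*\tmf$, observes that modulo the term $1$ it preserves the weight $\omega$, and notes that the weight-violating term cannot arise over $\SA(2)$ — which is precisely your observation that $\pi\colon \SA_* \to \SA(2)_*$ kills the left-hand factors $\zeta_n$, $\zeta_j^{2^{4-j}}$ of the offending coproduct terms, with multiplicativity of the total square playing the role of your ``$\bar\psi$ is an algebra map.'' The only difference is that you justify the vanishing via the explicit presentation of $\SA(2)_*$ as a quotient, where the paper appeals to dimensional reasons; yours is, if anything, the more transparent bookkeeping.
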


\begin{proof}
Certainly, the two modules are isomorphic as $\FF_2$-vector spaces.  To see there is an isomorphism of right $\SA(2)$-modules, note that the right action of the total square $\Sq{}=\sum_{i\geq 0} \Sq{i}$ on the generators of $H_* \tmf$ is given by:
\begin{align*}
\zeta_1^8 \cdot \Sq{} &= \zeta_1^8 + 1;\\
\zeta_2^4 \cdot \Sq{} &= \zeta_2^4 + \zeta_1^8 + 1;\\
\zeta_3^2 \cdot \Sq{} &= \zeta_3^2 + \zeta_2^4 + \zeta_1^8 + 1; \\
\zeta_n \cdot \Sq{} &= \sum_{i=0}^n \zeta_{n-i}^{2^i}
\end{align*}
for $n > 3$.  Since $\omega(1) = 0$, modulo the identity the total square preserves the weight of the generators of $H_* \tmf$.  Note that $\zeta_1^{2^{n-1}} \Sq{2^{n-1}} = 1$, hence the total square over $\SA(2)$ cannot contain a 1 in the expansion for dimensional reasons.
\end{proof}

Consider the homomorphism $V:\SA_* \to \SA_*$ defined on generators by
\[
V(\zeta_i) = 
\begin{cases}
1, &i = 0,1;\\
\zeta_{i-1}, &i \geq 2.\\
\end{cases}
\]
Restricting $V$ to the subring $H_* \tmf \subset \SA_*$ clearly provides a surjection $V_\tmf: H_* \tmf \to H_* \bo$.   Let $M_\bo (4i)$ denote the image of $N_{8i}^\tmf$ under the homomorphism $V_\tmf$. It is generated by all monomials with $\omega(\zeta^I) \leq 4i$.  The following proposition is clear.

\begin{proposition}\label{p:A2BG}
As right $\SA(2)$-modules
\begin{equation}
N_{8i}^\tmf \cong \sus{8i} M_\bo(4i).
\end{equation}
\end{proposition}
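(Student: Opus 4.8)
The plan is to check that the ring map $V_\tmf: H_*\tmf\to H_*\bo$ of the preceding paragraph is itself the required map, after shifting internal degrees by $8i$ on the summand $N_{8i}^\tmf$. The first point to establish is that $V_\tmf$ is a map of right $\SA(2)$-modules. Since $V_\tmf$ and the total square $\Sq{}$ are both multiplicative, the Cartan formula reduces this to the identities $V_\tmf(g\cdot\Sq{j})=V_\tmf(g)\cdot\Sq{j}$ for $j\in\{1,2,4\}$ and $g$ running over the polynomial generators $\zeta_1^8,\zeta_2^4,\zeta_3^2,\zeta_4,\zeta_5,\dots$ of $H_*\tmf$: from these one bootstraps to all of $H_*\tmf$ by iterating the Cartan formulas for $\Sq{1},\Sq{2},\Sq{4}$ (whose coproducts involve only $\Sq{0},\dots,\Sq{4}$, all lying in $\SA(2)$), and then to all of $\SA(2)$ since $\SA(2)$ is generated by $\Sq{1},\Sq{2},\Sq{4}$. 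Each of these finitely many identities — the generators $\zeta_n$ with $n\geq 4$ all behaving uniformly — is an immediate calculation from the action of $\Sq{}$ on the $\zeta_i$ recorded in the proof of Lemma~\ref{l:tmfA2}, together with the relations $(y^2)\cdot\Sq{2m}=(y\cdot\Sq{m})^2$ and $(y^2)\cdot\Sq{2m+1}=0$; the one case with any content is $\zeta_2^4\cdot\Sq{4}=\zeta_1^8$ in $H_*\tmf$ versus $\zeta_1^4\cdot\Sq{4}=1$ in $H_*\bo$, which is consistent precisely because $V_\tmf(\zeta_1^8)=1$.

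Granting this, I would then show that $V_\tmf$ restricts to an isomorphism $N_{8i}^\tmf\to\sus{8i}M_\bo(4i)$ of right $\SA(2)$-modules. By Lemma~\ref{l:tmfA2}, each $N_{8i}^\tmf$ is an $\SA(2)$-submodule of $H_*\tmf$, so by the previous step its image $M_\bo(4i)$ is an $\SA(2)$-submodule of $H_*\bo$ and $V_\tmf$ restricts to an $\SA(2)$-linear surjection onto it. It is injective on $N_{8i}^\tmf$: the monomial $(\zeta_1^8)^{a_1}(\zeta_2^4)^{a_2}(\zeta_3^2)^{a_3}\zeta_4^{a_4}\cdots$ maps to $(\zeta_1^4)^{a_2}(\zeta_2^2)^{a_3}\zeta_3^{a_4}\cdots$, and the exponents $a_2,a_3,\dots$ of the image, together with the requirement that the total weight equal $8i$, recover $a_1$, so distinct weight-$8i$ monomials have distinct images. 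Finally, on each polynomial generator $V_\tmf$ lowers internal degree by exactly the weight of that generator ($8$ for $\zeta_1^8,\zeta_2^4,\zeta_3^2$ and $2^{n-1}$ for $\zeta_n$ with $n\geq 4$), hence lowers internal degree by exactly $8i$ on all of $N_{8i}^\tmf$; this uniform shift is what produces the suspension $\sus{8i}$. Combining the two steps gives the proposition.

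The step I expect to require actual care is the $\SA(2)$-equivariance in the first part. One has to see clearly why restricting the action to $\SA(2)$, rather than to all of $\SA$, is precisely what makes $V_\tmf$ a module map: its failure to be $\SA$-linear is carried entirely by operations such as $\Sq{8}$ and $\Sq{12}$, which see the bottom class $1$ and do not lie in $\SA(2)$. One also has to be confident that the finitely many generator-level checks, together with the common pattern governing the $\zeta_n$ for $n\geq 4$, genuinely exhaust all cases.
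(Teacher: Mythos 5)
Your proof is correct and takes essentially the same route as the paper's: the paper's entire argument is the observation that $V_\tmf$ is injective on $N_{8i}^\tmf$ because the exponent of $\zeta_1$ in each monomial is forced by the weight, with the $\SA(2)$-equivariance of $V_\tmf$ (declared ``clear'' just before the statement) and the uniform degree shift by $8i$ left implicit. Your generator-by-generator Cartan-formula verification of that equivariance is exactly the suppressed check, and your identification of $\zeta_2^4\cdot\Sq{4}=\zeta_1^8\mapsto 1$ versus $\zeta_1^4\cdot\Sq{4}=1$ as the only case with content is accurate.
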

\begin{proof}
Due to the weight requirements, $V_\tmf$ is injective when restricted to $N_{8i}^\tmf$.  Indeed, the exponent of $\zeta_1$ in each monomial is uniquely determined by the other exponents.
\end{proof}

Additionally, if we denote by $N_k^\bo$ the $\FF_2$-vector space inside $H_* \bo$ generated by all elements of weight $k$ with $N_0^\bo = \FF_2$ generated by the identity, we have a similar lemma:
\begin{lemma}\label{l:MboA1}
As right $\SA(1)$-modules, 
\[
M_\bo(4i) \cong \bigoplus_{j=0}^n N_{4j}^\bo.
\]
\end{lemma}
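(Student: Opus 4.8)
The plan is to run the argument of Lemma~\ref{l:tmfA2} one step down the tower, with $\SA(2)$ replaced by $\SA(1)$, the ring $H_*\tmf$ replaced by $H_*\bo=\FF_2[\zeta_1^4,\zeta_2^2,\zeta_3,\zeta_4,\dots]$, and the weight $8$ replaced by the weight $4$. First I would record the vector-space statement: since $\omega(\zeta_n)=2^{n-1}$, the polynomial generators $\zeta_1^4,\zeta_2^2,\zeta_3,\zeta_4,\dots$ carry weights $4,4,4,8,16,\dots$, all divisible by $4$, so every monomial of $H_*\bo$ has weight divisible by $4$. As $M_\bo(4i)$ is by definition the span of the monomials with $\omega(\zeta^I)\le 4i$, this gives $M_\bo(4i)=\bigoplus_{j=0}^{i}N_{4j}^\bo$ as $\FF_2$-vector spaces, and it then remains only to check that the right $\SA(1)$-action respects this weight grading; for then each $N_{4j}^\bo$ with $j\le i$ is a right $\SA(1)$-submodule of $M_\bo(4i)$ and the asserted decomposition is an equality of modules.

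Next I would reduce this check to the algebra generators $\Sq{1},\Sq{2}$ of $\SA(1)$. Because the total square $\Sq{}=\sum_{k\ge 0}\Sq{k}$ acts on the ring $H_*\bo$ through a ring endomorphism (the Cartan formula) and $\omega$ is additive on products, it is enough to verify that $\Sq{1}$ and $\Sq{2}$ preserve weight on the generators $\zeta_1^4,\zeta_2^2,\zeta_3,\zeta_n$ $(n>3)$. Squaring the formulas of Lemma~\ref{l:tmfA2} and using $\zeta_0=1$ gives
\begin{align*}
\zeta_1^4\cdot\Sq{} &= \zeta_1^4+1;\\
\zeta_2^2\cdot\Sq{} &= \zeta_2^2+\zeta_1^4+1;\\
\zeta_3\cdot\Sq{} &= \zeta_3+\zeta_2^2+\zeta_1^4+1;\\
\zeta_n\cdot\Sq{} &= \sum_{i=0}^{n}\zeta_{n-i}^{2^i}\qquad(n>3),
\end{align*}
and reading off the components of degree $|g|-1$ and $|g|-2$ one sees at once that $g\cdot\Sq{1}$ and $g\cdot\Sq{2}$ are either $0$ or of weight $\omega(g)$ for every generator $g$ — for instance $\zeta_2^2\cdot\Sq{2}=\zeta_1^4$, $\zeta_3\cdot\Sq{1}=\zeta_2^2$, and $\zeta_n\cdot\Sq{1}=\zeta_{n-1}^2$, each of the appropriate weight $4$ or $2^{n-1}$.

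The only way weight could fail to be preserved would be the appearance of the summand $1$, of weight $0$; but $1$ occurs in $g\cdot\Sq{}$ only as the degree-$0$ term $g\cdot\Sq{|g|}$, and every generator of $H_*\bo$ has degree at least $4$ (the minimum being $|\zeta_1^4|=4$), so producing it requires a square $\Sq{k}$ with $k\ge 4$. Since $\SA(1)$ is generated as an algebra by $\Sq{1}$ and $\Sq{2}$ alone, no such square enters — this is precisely the analogue of the remark in Lemma~\ref{l:tmfA2} that $\Sq{2^{n-1}}$ (there $\Sq{8}$) is not among the generators $\Sq{1},\Sq{2},\Sq{4}$ of $\SA(2)$. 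Hence $\Sq{1}$ and $\Sq{2}$, and therefore all of $\SA(1)$, act on $H_*\bo$ preserving the weight grading, and the lemma follows. The step needing the most care — and the natural place to slip — is this last reduction: one must be sure that passing from the full right $\SA$-action to the two algebra generators of $\SA(1)$ loses no information (Cartan formula together with the additivity of $\omega$) and that their degrees, being at most $2<4$, genuinely forbid any descent to weight $0$.
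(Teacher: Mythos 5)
Your proof is correct and follows essentially the same route the paper intends: the paper states this lemma without a separate proof, presenting it as ``a similar lemma'' to Lemma~\ref{l:tmfA2}, and your argument is precisely that proof transported one step down (squaring the total-square formulas, noting weight is preserved modulo the identity, and observing that producing the summand $1$ requires a $\Sq{k}$ with $k\geq 4$, which is unavailable in $\SA(1)$). Your added care about reducing to the algebra generators $\Sq{1},\Sq{2}$ via the Cartan formula is a detail the paper leaves implicit, and you correctly read the upper index $n$ in the statement as $i$.
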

Further restricting $V$ to the subring $H_* \bo$ provides a surjection $V_\bo : H_* \bo \to H_* \es \ZZ$.  Let $M_{\es \ZZ}(2j)$ denote the image of $N_{4j}^\bo$ under $V$.  This submodule is generated by all monomials with $\omega(\zeta^I) \leq 2j$.  As in Proposition~\ref{p:A2BG} we have the identification

\begin{proposition}\label{p:A1BG}
As right $\SA(1)$-modules,
\begin{equation}
N_{4j}^\bo \cong \sus{4j} M_{\es \ZZ}(2j).
\end{equation}
\end{proposition}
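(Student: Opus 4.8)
The plan is to mirror the proof of Proposition~\ref{p:A2BG}, taking the candidate map to be the restriction of $V$ to $N_{4j}^\bo$. By the definition of $M_{\es \ZZ}(2j)$ as $V(N_{4j}^\bo)$ this restriction is surjective, so the first thing to establish is injectivity. A monomial of $H_* \bo$ has the shape $\zeta_1^{4a_1}\zeta_2^{2a_2}\zeta_3^{a_3}\zeta_4^{a_4}\cdots$, and its weight is $4a_1 + 4a_2 + 4a_3 + 8a_4 + 16a_5 + \cdots$; imposing membership in $N_{4j}^\bo$, i.e.\ total weight $4j$, therefore pins down the exponent $a_1$ of $\zeta_1$ in terms of the remaining exponents. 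Since $V$ simply deletes the $\zeta_1$-power and reindexes the other factors, distinct monomials of $N_{4j}^\bo$ have distinct images, so $V$ is injective there, exactly as in Proposition~\ref{p:A2BG}. The same bookkeeping shows that $V$ lowers internal degree on such a monomial by $4a_1 + 4a_2 + 4a_3 + 8a_4 + \cdots$, which is precisely its weight, hence uniformly by $4j$ on $N_{4j}^\bo$; this accounts for the suspension $\sus{4j}$ in the statement.

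It remains to check that this vector-space isomorphism $N_{4j}^\bo \to \sus{4j} M_{\es \ZZ}(2j)$ respects the right $\SA(1)$-action; note first that $N_{4j}^\bo$ is a right $\SA(1)$-submodule of $H_* \bo$, since, just as in Lemma~\ref{l:tmfA2}, the right $\SA(1)$-action preserves the weight grading of $H_* \bo$. Now $H_* \bo$ and $H_* \es \ZZ$ are comodule algebras, so the right action of the total square $\Sq{}$ is multiplicative, and $V$ is a ring homomorphism; combining these with the Cartan formula reduces the required identity $V(x \cdot a) = V(x) \cdot a$ for $a \in \SA(1)$ to the case in which $x$ is one of the polynomial generators $\zeta_1^4, \zeta_2^2, \zeta_3, \zeta_4, \ldots$ of $H_* \bo$ and $a \in \{\Sq{1}, \Sq{2}\}$. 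From the formula $\zeta_n \cdot \Sq{} = \sum_{i=0}^{n} \zeta_{n-i}^{2^i}$ used in Lemma~\ref{l:tmfA2} (together with multiplicativity of the $\Sq{}$-action) one computes, for instance, $\zeta_i \cdot \Sq{1} = \zeta_{i-1}^2$ and $\zeta_i \cdot \Sq{2} = 0$ for $i \geq 3$, and $\zeta_2^2 \cdot \Sq{1} = 0$, $\zeta_2^2 \cdot \Sq{2} = \zeta_1^4$; applying $V$ carries these to the corresponding identities $\zeta_{i-1} \cdot \Sq{1} = \zeta_{i-2}^2$, $\zeta_{i-1} \cdot \Sq{2} = 0$, $\zeta_1^2 \cdot \Sq{1} = 0$, $\zeta_1^2 \cdot \Sq{2} = 1$ in $H_* \es \ZZ$, and a matching occurs term by term.

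The one place this requires care, and the reason the statement concerns $\SA(1)$ rather than all of $\SA$, is the last generator $\zeta_1^4$, which $V$ sends to the unit $1 \in H_* \es \ZZ$: over the full Steenrod algebra $V$ fails to be linear exactly here, since $\zeta_1^4 \cdot \Sq{} = \zeta_1^4 + 1$ produces a unit via $\zeta_1^4 \cdot \Sq{4} = 1$ whereas $1 \cdot \Sq{4} = 0$. But $\Sq{4} \notin \SA(1)$, and $\zeta_1^4 \cdot \Sq{1} = \zeta_1^4 \cdot \Sq{2} = 0$, so the class $\zeta_1^4$ spans a trivial right $\SA(1)$-submodule of $H_* \bo$, matching the trivial $\SA(1)$-submodule generated by $1$ in $H_* \es \ZZ$. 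I expect this to be the only genuinely delicate point; once the generators are verified, the reduction above upgrades the vector-space isomorphism to the asserted isomorphism $N_{4j}^\bo \cong \sus{4j} M_{\es \ZZ}(2j)$ of right $\SA(1)$-modules. In effect, the proof is the computation behind Lemma~\ref{l:tmfA2} transported one step down the tower, along the map $\bo \to \es \ZZ$.
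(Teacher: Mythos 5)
Your proof is correct and matches the paper's: the paper establishes this proposition exactly as it establishes Proposition~\ref{p:A2BG}, namely by observing that the weight constraint forces the exponent of $\zeta_1$ to be determined by the remaining exponents, so the restriction of $V$ to $N_{4j}^\bo$ is injective onto its image $M_{\es \ZZ}(2j)$. Your additional verifications — the degree shift by $4j$ equalling the weight, and the $\SA(1)$-equivariance of $V$ including the delicate point that $\zeta_1^4 \cdot \Sq{4} = 1$ only breaks linearity outside $\SA(1)$ — fill in details the paper leaves implicit, but the approach is the same.
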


Goerss, Jones and Mahowald~\cite{GMJ86} identify the submodule $M_{\es \ZZ}(2j) \subset H_* \es \ZZ$ as the homology of the $j^{\txt{th}}$ integral Brown-Gitler spectrum:

\begin{theorem}[Goerss, Jones, Mahowald~\cite{GMJ86}]\label{t:BHZexist} 
For $j \geq 0$, there is a spectrum $\BHZ{j}$ and a map
\[
\BHZ{j} \xrightarrow{g} \es \ZZ
\]
such that
\begin{enumerate}
\item[(i)] $g_{*}$ sends $H_{*}(\BHZ{j})$ isomorphically onto the span of monomials of weight $\leq 2j$;
\item[(ii)] there are pairings
\[
\BHZ{m} \sm \BHZ{n} \to \BHZ{m+n}
\]
whose homology homomorphism is compatible with the multiplication in $H_{*}\es\ZZ$.
\end{enumerate}
\end{theorem}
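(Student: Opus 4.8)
The plan is to realize the weight filtration $M_{\es\ZZ}(2j)\subset H_*\es\ZZ$ geometrically, by presenting $\es\ZZ$ as a Thom spectrum over a suitably filtered double loop space and then Thomifying the filtration. First record the algebraic input. Since $H_*\es\ZZ = \FF_2[\zeta_1^2,\zeta_2,\zeta_3,\dots]$ is a polynomial ring on which $\omega$ is additive over products, and since the coaction on each $\zeta_i$ (computed as in Lemma~\ref{l:tmfA2}) can only lower weight, the subspaces $M_{\es\ZZ}(2j)$ form an increasing, exhaustive filtration of $H_*\es\ZZ$ by sub-$\SA_*$-comodules that moreover satisfies $M_{\es\ZZ}(2m)\cdot M_{\es\ZZ}(2n)\subseteq M_{\es\ZZ}(2m+2n)$. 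Thus the target module and the desired pairings already exist on the algebraic level; the problem is to produce spectra and maps inducing them.

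For the realization I would use the presentation of $\es\ZZ$ as the Thom spectrum $M\xi$ of a stable spherical fibration $\xi$ over $Y := \Omega^{2}S^{3}\langle 3\rangle$ classified by a double loop map --- the integral refinement of Mahowald's Thom-spectrum description of $\es\FF_2$ over $\Omega^{2}S^{3}$. The little-$2$-cubes action on $Y$ supplies a multiplicative ``James'' filtration $Y_{0}\subset Y_{1}\subset Y_{2}\subset\cdots$ with $Y_{m}\cdot Y_{n}\subseteq Y_{m+n}$ and $\operatorname{colim}_{j}Y_{j}=Y$. Restricting $\xi$ and applying the Thom construction stagewise yields spectra $T_{j}$ with maps $T_{j}\to T_{j+1}\to\cdots\to M\xi=\es\ZZ$, and Thomifying the filtration multiplication yields pairings $T_{m}\wedge T_{n}\to T_{m+n}$ compatible with the ring structure on $\es\ZZ$. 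The Thom isomorphism identifies $H_*T_{j}$ with a rank-one free $H_*(Y_{j})$-module, and the crux is to check --- using the known mod $2$ homology of the filtration quotients of $\Omega^{2}S^{3}\langle 3\rangle$ in terms of Dyer--Lashof operations --- that under this identification $T_{j}$ has homology mapping isomorphically onto precisely the weight-$\le 2j$ monomials $M_{\es\ZZ}(2j)$, after a normalizing suspension and, if necessary, splitting off a wedge summand. One then sets $\BHZ{j}$ to be that summand; statement (i) is the assertion that $g_{*}$ is the inclusion of $M_{\es\ZZ}(2j)$, and (ii) follows from the pairings together with the homological compatibility forced by $\operatorname{colim}_{j}T_{j}=\es\ZZ$.

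The main obstacle is the homological bookkeeping in that middle step: matching the topological filtration degree on $\Omega^{2}S^{3}\langle 3\rangle$ with the combinatorial weight $\omega$ on $\FF_2[\zeta_i]$ after the Thom isomorphism, and verifying that the filtration stages carry no cells beyond the intended Brown--Gitler summand. An alternative, more classical route avoids Thom spectra entirely: construct $\BHZ{j}$ by an inductive obstruction-theoretic argument over a modified Postnikov tower, exploiting that the integral Brown--Gitler module $M_{\es\ZZ}(2j)$ is ``one half of'' $\SA/\SA\,\Sq{1}$, so that the associated spectrum co-represents mod $2$ cohomology in a range of dimensions (the Brown--Gitler property). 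The map $g$ and the pairings then arise by applying this universal property to $\es\ZZ$ and to the algebraic multiplication $M_{\es\ZZ}(2m)\otimes M_{\es\ZZ}(2n)\to M_{\es\ZZ}(2m+2n)$. This is essentially the line taken, following Brown--Gitler, in~\cite{GMJ86} and~\cite{CDGM88}, where the hard part is instead verifying the representability-in-a-range property of the integral modules.
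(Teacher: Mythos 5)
This statement is quoted from the literature: the paper gives no proof of it at all, citing Goerss--Jones--Mahowald~\cite{GMJ86} (and implicitly~\cite{CDGM88}), so there is no internal argument to compare yours against. Judged on its own terms, your sketch correctly identifies the two standard constructions: Thomifying a multiplicative filtration of the double loop space presenting $\es\ZZ$ as a Thom spectrum, or the Brown--Gitler-style obstruction-theoretic construction via the representability-in-a-range property of the modules $M_{\es\ZZ}(2j)$; the latter is essentially the route of the cited sources, and the algebraic preliminaries you record (the $M_{\es\ZZ}(2j)$ are sub-comodules because the coaction only lowers weight, and they are multiplicatively compatible) are correct and are exactly what makes the statement plausible.

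However, as a proof the proposal has a genuine gap, and it is precisely the one you flag yourself: the identification of the Thom spectra of the filtration stages with the weight filtration (equivalently, the verification that the integral Brown--Gitler modules are realizable and co-represent homology in a range) is not a bookkeeping step to be checked at the end --- it is the entire content of the theorem, and your sketch defers it in both of the routes you describe. There is also a technical subtlety glossed over in the first route: $\Omega^{2}S^{3}\langle 3\rangle$ is not a sub-double-loop-space of $\Omega^{2}S^{3}$ but a cover of its basepoint component, so the little-cubes filtration does not simply restrict; setting up the correct induced filtration on the cover (or, as in~\cite{CDGM88}, extracting the integral spectra from the mod $2$ Brown--Gitler spectra) requires real work. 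None of this is a criticism of the paper, which treats the theorem as external input; but your proposal should be read as a roadmap to~\cite{GMJ86} and~\cite{CDGM88} rather than a self-contained argument.
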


\begin{remark}
The submodules $M_\bo(4i)$ are the so-called $\bo$-Brown-Gitler modules.  There is a family of spectra with similar properties, having these modules as their homology.  Proposition~\ref{p:A2BG} demonstrates that as an $\SA(2)$-module, $H_* \tmf$ is a direct sum of these modules.  On the level of spectra, however, $\tmf \sm \tmf$ does not split as a wedge of $\bo$-Brown-Gitler spectra.
\end{remark}

Combining the results of Lemmas~\ref{l:tmfA2} and~\ref{l:MboA1} with Theorem~\ref{t:BHZexist}, $H_* \tmf$ as a right $\SA(1)$-module can be written in terms of homology of integral Brown-Gitler spectra:
\begin{theorem}
As right $\SA(1)$-modules,
\[
H_{*}\tmf \cong \bigoplus_{0\leq j \leq i} \sus{8i+4j} H_* \BHZ{j}.
\]
\end{theorem}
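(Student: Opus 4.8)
The plan is to obtain the statement by concatenating the algebraic identifications established above, restricting each to the subalgebra $\SA(1) \subset \SA(2)$ where necessary, and then collecting suspension degrees. First note that since $\SA(1)$ is a subalgebra of $\SA(2)$, restriction of scalars turns any isomorphism of right $\SA(2)$-modules into an isomorphism of right $\SA(1)$-modules. Hence Lemma~\ref{l:tmfA2} reads $H_*\tmf \cong \bigoplus_{i\geq 0} N^\tmf_{8i}$ as right $\SA(1)$-modules, and Proposition~\ref{p:A2BG} gives $N^\tmf_{8i} \cong \sus{8i} M_\bo(4i)$ as right $\SA(1)$-modules. Combining these,
\[
H_*\tmf \;\cong\; \bigoplus_{i\geq 0} \sus{8i} M_\bo(4i)
\]
as right $\SA(1)$-modules.

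Next I would feed in the $\bo$-level decomposition. Lemma~\ref{l:MboA1} splits $M_\bo(4i) \cong \bigoplus_{j=0}^{i} N^\bo_{4j}$ as right $\SA(1)$-modules, and Proposition~\ref{p:A1BG} identifies $N^\bo_{4j} \cong \sus{4j} M_{\es\ZZ}(2j)$. Substituting gives
\[
H_*\tmf \;\cong\; \bigoplus_{i\geq 0} \sus{8i} \bigoplus_{j=0}^{i} \sus{4j} M_{\es\ZZ}(2j) \;\cong\; \bigoplus_{0\leq j\leq i} \sus{8i+4j} M_{\es\ZZ}(2j)
\]
as right $\SA(1)$-modules, where the reindexing just reflects that suspensions compose additively. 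Finally, Theorem~\ref{t:BHZexist}(i) provides a map of spectra $g:\BHZ{j} \to \es\ZZ$, so $g_*$ is a map of right $\SA$-modules carrying $H_*\BHZ{j}$ isomorphically onto the span of monomials of weight $\leq 2j$, which is precisely $M_{\es\ZZ}(2j)$; restricting this $\SA$-module isomorphism to $\SA(1)$ and replacing each $M_{\es\ZZ}(2j)$ accordingly yields the claimed isomorphism.

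The only bookkeeping points to verify are that the direct sums are legitimate, i.e.\ degreewise finite: for a fixed total degree $d$ only the finitely many pairs $(i,j)$ with $0\leq j\leq i$ and $8i+4j\leq d$ contribute, and each $H_*\BHZ{j}$ is itself degreewise finite and concentrated in nonnegative degrees. I do not expect a genuine obstacle here — the substantive content, namely that $N^\tmf_{8i}$, $M_\bo(4i)$, and $M_{\es\ZZ}(2j)$ are actually closed under the relevant Steenrod action and decompose as stated, is exactly what the total-square computation in the proof of Lemma~\ref{l:tmfA2} together with its $\bo$- and $\es\ZZ$-analogues (Lemma~\ref{l:MboA1}, Propositions~\ref{p:A2BG} and~\ref{p:A1BG}) already supplies. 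Thus the proof is a short composition of previously established module isomorphisms, with the main care being consistent tracking of the weight-to-degree shifts.
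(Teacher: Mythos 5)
Your proof is correct and is exactly the paper's argument: the paper states the theorem as an immediate consequence of combining Lemmas~\ref{l:tmfA2} and~\ref{l:MboA1} with Theorem~\ref{t:BHZexist} (via Propositions~\ref{p:A2BG} and~\ref{p:A1BG}), which is precisely the chain of isomorphisms you spell out. Your added care about restriction of scalars from $\SA(2)$ to $\SA(1)$ and the degreewise finiteness of the direct sum is sound bookkeeping that the paper leaves implicit.
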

The $E_2$-term of the Adams spectral sequence (\ref{e:bostmfASScor}) then becomes isomorphic to 
\begin{equation}
\bigoplus_{0 \leq j \leq i} \sus{8i+4j} \EE_{\SA(1)}^{s,t}\left( H^* \BHZ{j}, \FF_2 \right) \Rightarrow \pi_{t-s} (\bo \sm \tmf).
\end{equation}
This is precisely the Adams $E_2$-term converging to the homotopy of $\bo \sm \Omega$.  The chart can be obtained by applying the following theorem of Davis~\cite{DGM81} which links $\bo \sm \BHZ{n}$ to Adams covers of $\bo$ or $\bsp$, depending on the parity of $n$. 

\begin{theorem}[Davis~\cite{DGM81}]\label{t:pairing}
If $\overline{n} = (n_{1},\hdots,n_{s})$, let $\vert \overline{n} \vert = \sum_{i=1}^{s} n_{i}$ and $\alpha(\overline{n}) = \sum_{i=1}^{s}\alpha(n_{i})$, and $\BHZ{\overline{n}} = \bigwedge_{i=1}^{s}\BHZ{n_{i}}$.  Then there are homotopy equivalences
\[
\bo \sm \BHZ{\overline{n}} \simeq K \vee
\begin{cases}
\bo^{2\vert \overline{n} \vert - \alpha(\overline{n})}, &\txt{if $\vert \overline{n} \vert$ is even;} \\
\bsp^{2\vert \overline{n} \vert - 1 - \alpha(\overline{n})}, &\txt{if $\vert \overline{n} \vert$ is odd;} \\
\end{cases}
\]
where $K$ is a wedge of suspensions of $\es\FF_{2}$.
\end{theorem}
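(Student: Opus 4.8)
\noindent\emph{Proof sketch.} The plan is to induct on the length $s$ of $\overline{n}$, using the $\bo$-module structure to express each smash factor as a relative smash over $\bo$. Since $H^*(\bo\sm\BHZ{\overline{n}};\FF_2)\cong\SA\otimes_{\SA(1)}P$ with $P=\bigotimes_{i=1}^{s}H^*\BHZ{n_i}$, a change-of-rings identifies the Adams $E_2$-term for $\bo\sm\BHZ{\overline{n}}$ with $\EE_{\SA(1)}(P,\FF_2)$, so the statement splits into an algebraic part --- the right $\SA(1)$-module structure of $P$ --- and a realizability part. The base case $s=1$ is the known computation of $\bo\sm\BHZ{n}$: there $H^*\BHZ{n}$ decomposes over $\SA(1)$ as a free module, which realizes to a free $\SA$-module and hence to a wedge of suspensions of $\es\FF_2$, plus a single non-free summand whose change of rings is the cohomology of an Adams cover of $\bo$ (when $n$ is even) or of $\bsp$ (when $n$ is odd), namely $\bo^{2n-\alpha(n)}$, resp.\ $\bsp^{2n-1-\alpha(n)}$. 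This case is essentially contained in Mahowald's analysis of $\bo\sm\bo\simeq\bigvee_{j}\sus{4j}\bo\sm\BHZ{j}$, and $\alpha(n)$ enters through the weight filtration of $H_*\es\ZZ$: the top monomial of weight $\leq 2n$ is the product of the $\zeta_i$ dictated by the binary expansion of $n$.

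For the inductive step write $\BHZ{\overline{n}}\simeq\BHZ{\overline{m}}\sm\BHZ{n_s}$ with $\overline{m}=(n_1,\dots,n_{s-1})$, so that
\[
\bo\sm\BHZ{\overline{n}}\;\simeq\;\bigl(\bo\sm\BHZ{\overline{m}}\bigr)\sm_{\bo}\bigl(\bo\sm\BHZ{n_s}\bigr).
\]
By the inductive hypothesis the first factor is $K'\vee C'$ with $C'$ a single Adams cover of $\bo$ or $\bsp$, and by the base case the second is $K''\vee C''$ of the same shape; smashing over $\bo$ distributes, and the mixed terms $K'\sm_{\bo}(-)$ and $(-)\sm_{\bo}K''$ contribute only wedges of suspensions of $\es\FF_2$, since $\bsp\sm_{\bo}\es\FF_2$, like $\bo\sm_{\bo}\es\FF_2$, is such a wedge. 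The one real point is $C'\sm_{\bo}C''$: inspecting its $\FF_2$-homology shows it to be a single Adams cover of $\bo$ (if $\vert\overline{n}\vert$ is even) or of $\bsp$ (if $\vert\overline{n}\vert$ is odd), wedged with suspensions of $\es\FF_2$. Tracking the superscripts, each smash of two $\bsp$-type covers contributes an extra $+2$ (from $\bsp^a\sm_{\bo}\bsp^b$ being a cover $\bo^{a+b+2}$ wedged with $\es\FF_2$'s), and there is exactly one such smash per pair of odd entries, compensating the $-1$ that each odd entry carries in the base case; using $2\vert\overline{n}\vert-\alpha(\overline{n})=\sum_i(2n_i-\alpha(n_i))$ the total comes out to $2\vert\overline{n}\vert-\alpha(\overline{n})$, resp.\ $2\vert\overline{n}\vert-1-\alpha(\overline{n})$. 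Collecting the Eilenberg--Mac Lane summands into $K$ finishes the induction, and the map produced is one of $\bo$-module spectra because every ingredient --- the Brown--Gitler pairings of Theorem~\ref{t:BHZexist}, the relative smash, and the splittings of covers --- lives in $\bo$-modules.

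The hard part is twofold. The computational heart is the relative smash $\bsp^a\sm_{\bo}\bsp^b$, which must be pinned down on the nose --- which Adams cover survives, with what superscript, and with which wedge of $\es\FF_2$'s --- and whose combinatorics produces the ``$+2$ per pair of odd entries''; the identity $2\vert\overline{n}\vert-\alpha(\overline{n})=\sum_i(2n_i-\alpha(n_i))$ makes the arithmetic additive, but the parity corrections still require care. The conceptual obstacle is the realizability step: one must know that a connective, finite-type $\bo$-module spectrum whose Adams $E_2$-term is the sum of a filtration-zero part (coming from a free $\SA(1)$-module) and an appropriately shifted copy of the $\bo$- or $\bsp$-chart in fact splits off the corresponding Adams cover together with a wedge of suspensions of $\es\FF_2$. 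This rests on the collapse of the Adams spectral sequences for $\bo^k$ and $\bsp^k$ --- their charts are $v_1$-periodic, leaving no room for differentials --- and on the vanishing of the obstruction groups to lifting the algebraic splitting to the level of spectra, the same rigidity of $\bo$-module spectra with cohomology induced from $\SA(1)$ that underlies Section~3 and the proof of Theorem~\ref{t:main-theorem}.
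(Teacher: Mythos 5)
This statement is quoted from Davis~\cite{DGM81} and the paper supplies no proof of it at all --- it is used as a black box (e.g.\ in Lemma~\ref{l:extendmap} and in the fibre sequence~(\ref{e:decomp})) --- so there is no in-paper argument to measure your sketch against. Judged on its own terms, your outline is a reasonable reconstruction of how the result is actually proved in the literature: reduce by change of rings to the right $\SA(1)$-module structure of $\bigotimes_i H^*\BHZ{n_i}$, observe that this splits as (free) $\oplus$ (one small stably indecomposable piece), identify the $\EE_{\SA(1)}$ of the small piece with the chart of the appropriate Adams cover, and then realize the algebraic splitting topologically. Your induction on $s$ via $(\bo\sm\BHZ{\overline{m}})\sm_{\bo}(\bo\sm\BHZ{n_s})$ is a legitimate topological repackaging of the tensor-product-of-$\SA(1)$-modules computation, and the bookkeeping you do with $2\vert\overline{n}\vert-\alpha(\overline{n})=\sum_i(2n_i-\alpha(n_i))$ is consistent with the stated superscripts.

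That said, be aware that the two steps you flag as ``the hard part'' are not peripheral: they are essentially the entire theorem. The relative smash products $\bo^a\sm_{\bo}\bo^b$, $\bo^a\sm_{\bo}\bsp^b$, $\bsp^a\sm_{\bo}\bsp^b$ are exactly the $s=2$ instances of the statement (via the identification of $\bsp$, up to suspension and $\es\FF_2$ summands, with an Adams cover of $\bo\sm\BHZ{1}$), so asserting them is close to assuming what is to be proved; they must be computed, e.g.\ from the tensor product of the corresponding stable $\SA(1)$-modules. Likewise the realizability step --- passing from an isomorphism of Adams $E_2$-terms to a splitting of spectra --- requires an actual argument (vanishing of the relevant obstruction/extension groups for $\bo$-module maps out of Adams covers), not just collapse of the spectral sequences; note that Section~3 of this paper is devoted precisely to that kind of issue for $\bo\sm\tmf$, and it is not automatic. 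One smaller point in your favour: your disposal of the mixed terms is fine, since any $\es\FF_2$-module spectrum splits as a wedge of suspensions of $\es\FF_2$; but you should say explicitly that the inductive splittings are splittings of $\bo$-module spectra (as the paper indeed uses them) for the distribution over $\sm_{\bo}$ to be legitimate.
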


\begin{figure}[htb]
\includegraphics[width=4.5in]{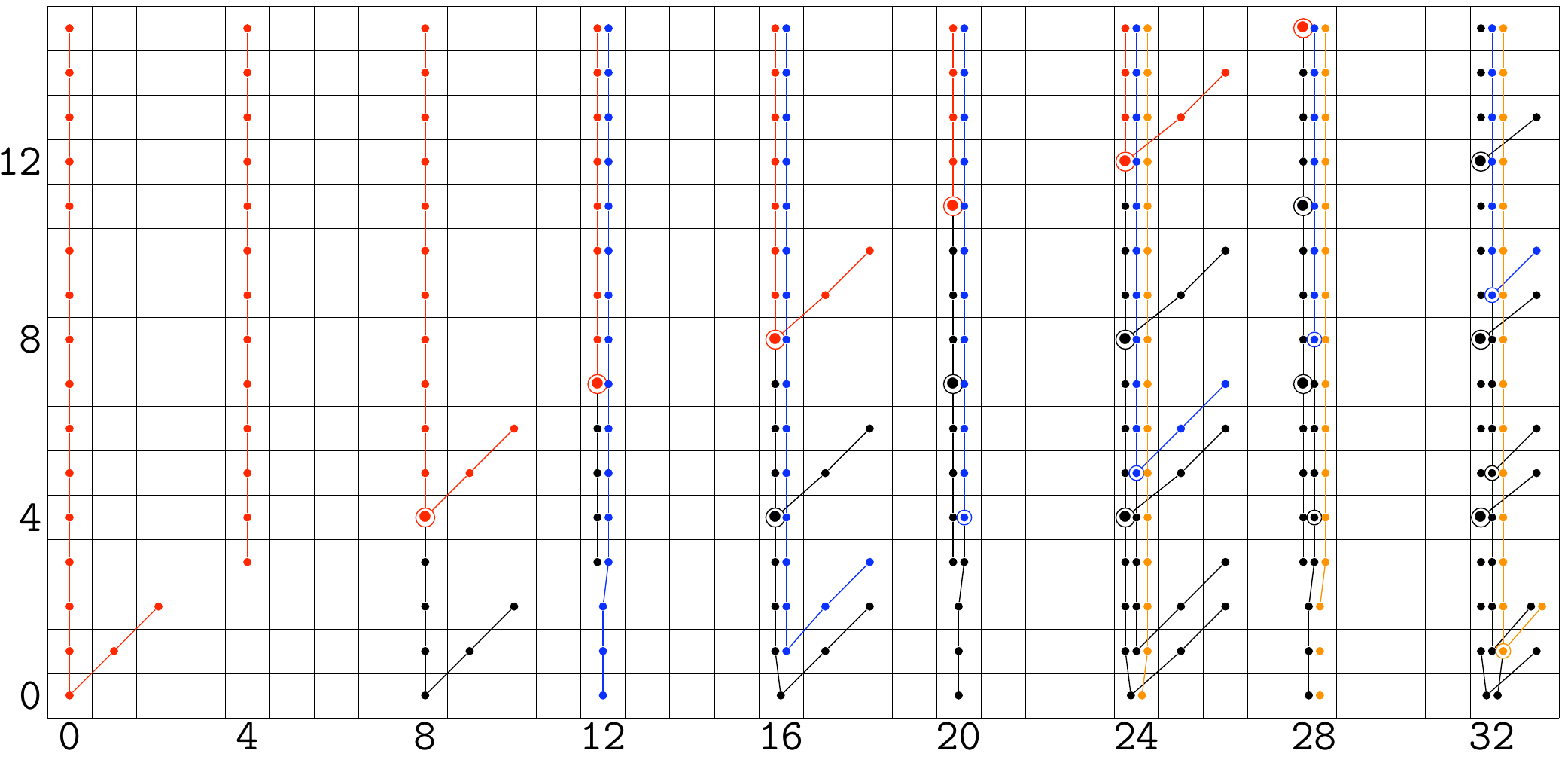}
\caption{$\EE^{s,t}_\SA \left(H^*(\bo \sm \tmf),\FF_2 \right) \Rightarrow \pi_{t-s}(\bo \sm \tmf)$}
\label{f:bostmf}
\end{figure}

The charts for $\bo$ and $\bsp$ are well known.  Using the above theorem along with the algebraic splitting of $H^* \tmf$, we see that Adams covers of $\bo$ begin in stems congruent to $0 \mod 8$ while Adams covers of $\bsp$ begin in stems congruent to $4 \mod 8$.  The first 32 stems of the chart for $\bo \sm \tmf$ is displayed in Figure~\ref{f:bostmf} modulo possible elements of order 2 in Adams filtration $s=0$ corresponding to free $\SA(1)$'s inside $H^* \tmf$.  The symbol $\bigodot$ appears in Figure~\ref{f:bostmf} to reduce clutter.  It is used to mark the beginning of another $\ZZ$-tower.  In general, all $\ZZ$-towers are found in stems congruent to $0 \mod 4$ while those supporting multiplication by $\eta$ occur in stems congruent to $4 \mod 8$.  

\begin{theorem}
There is an isomorphism of homotopy groups
\[
\pi_{*}(\bo \sm \tmf) \cong \pi_{*}\left( \bo \sm \Omega \right)
\] 
\end{theorem}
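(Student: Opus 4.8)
The plan is to establish the isomorphism by comparing the Adams spectral sequence (\ref{e:bostmfASScor}) for $\bo \sm \tmf$ with the corresponding one for $\bo \sm \Omega$, and showing that both degenerate with identical multiplicative structure. First I would combine the change-of-rings isomorphism with the algebraic splitting of $H^* \tmf$ as a right $\SA(1)$-module established above, which gives a decomposition
\[
\EE_{\SA(1)}^{s,t}\bigl( H^* \tmf, \FF_2 \bigr) \cong \bigoplus_{0 \le j \le i} \sus{8i+4j} \EE_{\SA(1)}^{s,t}\bigl( H^* \BHZ{j}, \FF_2 \bigr)
\]
of the $E_2$-term for $\bo \sm \tmf$; the right-hand side is precisely the $E_2$-term for $\bo \sm \Omega$. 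Since this identification is induced by a map of $\SA(1)$-modules, and the $\bo$-module structure on $\bo \sm \tmf$ acts on $E_2$ through the bigraded ring $\EE_{\SA(1)}^{*,*}(\FF_2,\FF_2)$ (the Adams $E_2$-term of $\bo$), the identification is one of modules over that ring; in particular it respects multiplication by the permanent cycles $h_0$ and $h_1$.

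Next I would compute $\pi_*(\bo \sm \Omega)$ outright. Davis's Theorem~\ref{t:pairing}, applied to each single index $\overline{n} = (j)$, identifies $\bo \sm \BHZ{j}$ with a wedge of an Adams cover of $\bo$ (when $j$ is even) or of $\bsp$ (when $j$ is odd), together with a wedge of suspensions of $\es \FF_2$. The Adams spectral sequences of $\bo$, $\bsp$ and $\es \FF_2$ are classical: they collapse, with the only extensions being multiplication by $2$ along the $h_0$-towers. Hence the same is true for $\bo \sm \BHZ{j}$, and therefore for the wedge $\bo \sm \Omega$, whose homotopy is thereby pinned down as a sum of shifted copies of $\pi_* \bo$ and $\pi_* \bsp$.

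The remaining and principal step is to show that the Adams spectral sequence for $\bo \sm \tmf$ also collapses with the same extensions; this is the main obstacle, since at this point we do not yet have a geometric map inducing the algebraic splitting. Because $\bo \sm \tmf$ is a $\bo$-module spectrum, its Adams spectral sequence is a module over the degenerate one for $\bo$, so every differential is linear over $h_0$ and $h_1$. Transporting the explicit ``lightning flash'' shape of the $E_2$-page through the isomorphism of the first step, every class lies in an infinite $h_0$-tower, is an $h_1$-multiple of such a class, or sits in Adams filtration zero coming from an $\es \FF_2$-summand; a bidegree inspection of the chart in Figure~\ref{f:bostmf}, of exactly the sort used to prove degeneration for $\bo$ and $\bsp$, then shows that no $d_r$ can be nonzero. (As a consistency check, the number of infinite towers in each stem is already forced by the rational homology of $\bo \sm \tmf$.) With $E_\infty = E_2$ on both sides and with the $h_0$- and $h_1$-module structures agreeing, the only extensions — by $2$ along the $\ZZ$-towers, by $\eta$ off them — coincide with those in $\bo \sm \Omega$, yielding $\pi_*(\bo \sm \tmf) \cong \pi_*(\bo \sm \Omega)$.
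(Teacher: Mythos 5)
Your proposal is correct and follows essentially the same route as the paper: identify the two $E_2$-terms via the algebraic splitting of $H^*\tmf$ as a right $\SA(1)$-module, observe that both Adams spectral sequences collapse (the paper's ``dimensional and naturality reasons'' are exactly your $h_0$-, $h_1$-linearity plus bidegree inspection, with the filtration-zero classes handled by the split-off free $\SA(1)$ summands), and match the resulting extensions. You merely spell out in more detail what the paper compresses into a few lines.
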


\begin{proof}
The $E_2$-terms of their respective Adams spectral sequences have been shown to be isomorphic.  Both spectral sequences collapse.  Indeed, the classes charted in Figure~\ref{f:bostmf} cannot support differentials for dimensional and naturality reasons.  Each element of order two in Adams filtration $s=0$ correspond to copies of $\SA(1)$ inside $H^* \tmf$.  These summands split off, obviating the existance of differentials.
\end{proof}

\section{The topological splitting} 
Theorem~\ref{t:main-theorem} concerns a $\bo$-module splitting of the spectrum $\bo \sm \tmf$.  The following observation will aid us in studying $\bo$-module maps.
\begin{lemma}\label{l:boequiv}
Let $X$ and $Y$ be spectra.  Then
	\[
		[\bo \sm X , \bo \sm Y]_\bo = [X, \bo \sm Y]
	\]
\end{lemma}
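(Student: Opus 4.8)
The plan is to recognize this as the free-forgetful adjunction between spectra and $\bo$-module spectra. The functor $X \mapsto \bo \sm X$, with $\bo$-module structure coming from multiplication on the left factor, is left adjoint to the forgetful functor from $\bo$-modules to spectra; applying the adjunction to the $\bo$-module $M = \bo \sm Y$ yields the claimed natural bijection. I would make this explicit rather than merely cite it.

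First I would describe the two maps. Given a $\bo$-module map $f \colon \bo \sm X \to \bo \sm Y$, precompose with the map $X \simeq S \sm X \xrightarrow{\iota \sm 1} \bo \sm X$, where $\iota \colon S \to \bo$ is the unit; this produces a map of spectra $\widehat{f} \colon X \to \bo \sm Y$. Conversely, given a map of spectra $g \colon X \to \bo \sm Y$, form the composite
\[
\bo \sm X \xrightarrow{\ 1 \sm g\ } \bo \sm \bo \sm Y \xrightarrow{\ \mu \sm 1\ } \bo \sm Y,
\]
where $\mu \colon \bo \sm \bo \to \bo$ is the multiplication; since $\mu$ is a map of left $\bo$-modules, this composite $\widetilde{g}$ is a $\bo$-module map. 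Both assignments are natural in $X$ and $Y$.

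Next I would check the two composites are the identity. Starting from $g$ and passing to $\widetilde{g}$ and then restricting along $\iota \sm 1$, one gets $(\mu \sm 1) \circ (1 \sm g) \circ (\iota \sm 1) = (\mu \sm 1)\circ(\iota \sm 1 \sm 1)\circ g$, which is $g$ by the left unit law $\mu \circ (\iota \sm 1) = \mathrm{id}_\bo$. For the other direction, starting from a $\bo$-module map $f$, one must show $(\mu \sm 1)\circ(1 \sm \widehat f) = f$; this follows because $f$, being a $\bo$-module map, is determined by its restriction to $X \subset \bo \sm X$ together with equivariance — concretely $f = f \circ (\mu \sm 1) \circ (\iota \sm 1 \sm 1)$ rewrites, using $\bo$-linearity of $f$, as $(\mu \sm 1)\circ(1 \sm f)\circ(\iota \sm 1 \sm 1) = (\mu \sm 1)\circ(1\sm \widehat f)$.

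The only point that requires care — and the step I expect to be the main obstacle — is ensuring everything is stated in a setting where "$\bo$-module spectrum'' and the smash products involved are genuinely well-behaved, so that the unit and associativity diagrams above literally commute rather than merely up to unspecified homotopy. Working in a modern symmetric monoidal model for spectra (EKMM or orthogonal spectra), $\bo$ is an $E_\infty$-ring and the free module functor is an honest left adjoint, so the argument goes through verbatim; I would simply remark that we work in such a category, as is standard for $\bo$-module constructions of this type.
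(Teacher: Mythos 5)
Your proposal is correct and is essentially the paper's own argument: the paper's proof simply writes down the same two mutually inverse assignments $f \mapsto f \circ (\mathsf{u}_\bo \sm 1)$ and $g \mapsto (\mathsf{m}_\bo \sm 1)\circ(1 \sm g)$ of the free-forgetful adjunction, without even verifying that the composites are identities. Your version is just a more careful write-up of the same approach.
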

\begin{proof}
Let $\mathsf{u}_\bo:S^0 \to \bo$ and $\mathsf{m}_\bo: \bo \sm \bo \to \bo$ denote the unit and the product map of $\bo$, respectively.  Given $f: \bo \sm X \to \bo \sm Y$ and $g: X \to \bo \sm Y$, the equivalence is given by the composites
	\begin{align*}
	f &\mapsto f \circ ( \mathsf{u} \sm 1 ) \\
	g &\mapsto (\mathsf{m}_\bo \sm 1)\circ(1 \sm g)
	\end{align*} 
\end{proof}
The spectra $(\bo, \mathsf{m}_\bo, \mathsf{u}_\bo)$ and $(\tmf, \mathsf{m}_\tmf, \mathsf{u}_\tmf)$ are both unital $\mathcal{E}_\infty$-ring spectra~\cite{May77}.  This induces a unital $\mathcal{E}_\infty$-ring structure $(\bo \sm \tmf,\mathfrak{m},\mathfrak{u})$.  This structure will play an important role in the proof of the main theorem.  We begin by defining an increasing filtration of $\Omega$ via:
	\begin{equation}\label{e:filtration1}
		\Omega^n = \bigvee_{j=0}^n \bigvee_{i=j}^\infty \sus{8i+4j} \BHZ{j} 
	\end{equation}
Notationally, it will be convenient to let $B(j) = \sus{12j} \BHZ{j}$, so that the filtration (\ref{e:filtration1}) can be rewritten as
\begin{equation}\label{e:filtration2}
\Omega^n = \bigvee_{j = 0}^n \bigvee_{i\geq0} \sus{8i}B(j).
\end{equation}
The proof of Theorem~\ref{t:main-theorem} will proceed inductively on $n$.  We will assume the existance of a $\bo$-module map $\varrho_{2^i-1} : \bo \sm \Omega^{2^i-1} \to \bo \sm \tmf$ which is a stable $\SA$-isomorphism through a certain dimension.  The inductive step will be then to construct a $\bo$-module map $\varrho_{2^{i+1}-1} : \bo \sm \Omega^{2^{i+1}-1} \to \bo \sm \tmf$ which is a stable $\SA$-isomorphism through higher dimensions.  To do this, we will employ the pairings given in Theorem~\ref{t:BHZexist}(ii).  Define the map  
	\begin{equation}\label{e:intmap}
		g_{m,n}: \sus{8n} B(m) \to \bo \sm \tmf
	\end{equation}
to be the restriction of $\varrho_{2^i-1}$ to the summand $\sus{8n} B(m)$.  Denote by $g_m = g_{m,0}$.  

\begin{lemma}\label{l:extendmap}
	Let $m = 2^i$ and $0 \leq n < m$.  Suppose there are $\bo$-module maps $f_m: \bo \sm \BHZ{m} \to \bo \sm \tmf$ and $f_n : \bo \sm \BHZ{n} \to \bo \sm \tmf$ inducing injections on homology.  Then there is a $\bo$-module map
	\[
		f_{m+n}: \bo \sm \BHZ{m+n} \to \bo \sm \tmf
	\]
inducing an injection on homology.
\end{lemma}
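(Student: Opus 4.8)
The plan is to manufacture $f_{m+n}$ from $f_m$, $f_n$, the ring structure on $\bo\sm\tmf$, and the Brown--Gitler pairing $\mu\colon\BHZ{m}\sm\BHZ{n}\to\BHZ{m+n}$ of Theorem~\ref{t:BHZexist}(ii). First I would use Lemma~\ref{l:boequiv} to replace the $\bo$-module maps by ordinary maps: let $h_m\colon\BHZ{m}\to\bo\sm\tmf$ and $h_n\colon\BHZ{n}\to\bo\sm\tmf$ correspond to $f_m$ and $f_n$. Being the composites of $f_m,f_n$ with $\mathsf{u}_\bo\sm 1$, they are again injective on homology. Using the $\mathcal{E}_\infty$-multiplication $\mathfrak{m}$ on $\bo\sm\tmf$ I would then form
\[
\tilde F\colon\ \BHZ{m}\sm\BHZ{n}\ \xrightarrow{\ h_m\sm h_n\ }\ (\bo\sm\tmf)\sm(\bo\sm\tmf)\ \xrightarrow{\ \mathfrak{m}\ }\ \bo\sm\tmf ,
\]
and let $F\colon\bo\sm\BHZ{m}\sm\BHZ{n}\to\bo\sm\tmf$ be the $\bo$-module map corresponding to $\tilde F$ under Lemma~\ref{l:boequiv}. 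The task is then to carry $F$ across $1\sm\mu$.

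It is here that the hypothesis $m=2^i$ is used. Since $0\le n<2^i$, the binary expansions of $m$ and $n$ are disjoint, so $\vert(m,n)\vert=m+n$ and $\alpha(m)+\alpha(n)=\alpha(m+n)$. Feeding $\overline n=(m,n)$ and $\overline n=(m+n)$ into Davis's Theorem~\ref{t:pairing} therefore exhibits the \emph{same} Adams cover of $\bo$ (if $m+n$ is even) or of $\bsp$ (if $m+n$ is odd) as the summand of $\bo\sm\BHZ{m}\sm\BHZ{n}$ and of $\bo\sm\BHZ{m+n}$ complementary to a wedge of suspensions of $\es\FF_2$. Together with the homological description of $\mu$ in Theorem~\ref{t:BHZexist}(ii), this is precisely what is needed to split the pairing: for $m$ a power of $2$ and $n<m$ there is a section $\sigma\colon\BHZ{m+n}\to\BHZ{m}\sm\BHZ{n}$ of $\mu$, compatible with the reference maps $g$ to $\es\ZZ$, exactly as in the analogous arguments of Mahowald~\cite{Mah81} and Davis~\cite{DGM81},~\cite{Davis83}. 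I expect constructing this $\sigma$ to be the one genuinely non-formal step, and hence the main obstacle.

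Granting $\sigma$, I would define $f_{m+n}$ to be the $\bo$-module map corresponding to $\tilde F\circ\sigma\colon\BHZ{m+n}\to\bo\sm\tmf$ under Lemma~\ref{l:boequiv}. For injectivity on homology, postcompose $\tilde F\circ\sigma$ with the canonical ring map $\bo\sm\tmf\to\es\ZZ$: since $\tilde F=\mathfrak{m}\circ(h_m\sm h_n)$, and $h_m,h_n$ may be taken to lie over $g_m,g_n$, the multiplicativity of the pairing and the relation $\mu\circ\sigma\simeq 1$ identify this composite with the Brown--Gitler map $g_{m+n}\colon\BHZ{m+n}\to\es\ZZ$, which is injective on homology by Theorem~\ref{t:BHZexist}(i); hence $\tilde F\circ\sigma$, and therefore $f_{m+n}$, is injective on homology. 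In the language of the weight filtration of Section~2 this simply says $f_{m+n}$ is triangular --- it carries each monomial basis element of $H_*\BHZ{m+n}$ to itself modulo strictly larger weight --- a property it inherits from $f_m$ and $f_n$, and which is also what will allow the inductive construction of $\varrho$ to advance the stable $\SA$-isomorphism into the expected range.
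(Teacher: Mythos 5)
Your strategy shares the paper's two key inputs --- the product map $\mathfrak{m}\circ(f_m\sm f_n)$ and Davis's Theorem~\ref{t:pairing} applied with the observation that $\alpha(m)+\alpha(n)=\alpha(m+n)$ when $m=2^i$ and $0\leq n<m$ --- but the step you yourself flag as the main obstacle is a genuine gap, and it is one you do not actually need to face. You ask for a section $\sigma\colon\BHZ{m+n}\to\BHZ{m}\sm\BHZ{n}$ of the Brown--Gitler pairing at the level of the underlying spectra. Theorem~\ref{t:pairing} provides nothing of the sort: its splittings exist only \emph{after} smashing with $\bo$ (they are statements about $\bo\sm\BHZ{\overline{n}}$ as a $\bo$-module spectrum), and the pairings $\BHZ{m}\sm\BHZ{n}\to\BHZ{m+n}$ are not expected to admit spectrum-level sections. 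As written, your definition of $f_{m+n}$ therefore rests on an unestablished claim.

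The repair is to stay in $\bo$-modules throughout, which is what the paper does. Applying Theorem~\ref{t:pairing} to $\overline{n}=(m,n)$ and to $(m+n)$, the equality $\alpha(m)+\alpha(n)=\alpha(m+n)$ shows that $\bo\sm\BHZ{m}\sm\BHZ{n}$ and $\bo\sm\BHZ{m+n}$ are, modulo wedges of suspensions of $\es\FF_2$, the same Adams cover of $\bo$ or $\bsp$; hence there is an equivalence of $\bo$-module spectra $\bo\sm\BHZ{m}\sm\BHZ{n}\simeq\left(\bo\sm\BHZ{m+n}\right)\vee K$ with $K$ a wedge of suspensions of $\es\FF_2$. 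Since $[\es\FF_2,\bo\sm\tmf]=0$, the $\bo$-module map $\mathfrak{m}\circ(f_m\sm f_n)$ vanishes on $K$ and so factors through the summand $\bo\sm\BHZ{m+n}$; that factorization is the desired $f_{m+n}$, and it is injective on homology because the original composite is injective on the image of $H_*(\bo\sm\BHZ{m+n})$ under the splitting. Note that the vanishing of $[\es\FF_2,\bo\sm\tmf]$, which your argument never invokes, is precisely what substitutes for your hypothetical section. Your preliminary reduction via Lemma~\ref{l:boequiv} and your closing remarks on triangularity are harmless but play no essential role once the argument is arranged this way.
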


\begin{proof}
	For all $0 \leq n < m$, Theorem~\ref{t:pairing} supplies equivalences of $\bo$-module spectra
		\begin{equation}
			\bo \sm \BHZ{m} \sm \BHZ{n} \simeq \left( \bo \sm \BHZ{m+n} \right) \vee K
		\end{equation}
	where $K$ is a wedge of suspensions of $\es \FF_2$.  There are no maps $[\es \FF_2, \bo \sm \tmf]$ so that the composite $\mathfrak{m} \circ(f_m \sm f_n)$ lifts as a $\bo$-module map to the first summand
		\[
			f_{m+n} : \bo \sm \BHZ{m+n} \to \bo \sm \tmf
		\]
hence is also an injection in homology.
\end{proof}

\begin{corollary}\label{c:extendmap}
	Suppose there are $\bo$-module maps $\varrho_{2^i-1}: \bo \sm \Omega^{2^i - 1} \to \bo \sm \tmf$ and $g_{2^i}: \bo \sm B(2^i) \to \bo \sm \tmf$ inducing injections on homology.  Then there is a $\bo$-module map 
		\[
			\varrho_{2^{i+1}-1} : \bo \sm \Omega^{2^{i+1}-1} \to \bo \sm \tmf
		\]
inducing an injection on homology groups.
\end{corollary}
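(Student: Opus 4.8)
The plan is to obtain $\varrho_{2^{i+1}-1}$ by ``multiplying'' the given map $\varrho_{2^i-1}$ by the fixed bottom--floor map $g_{2^i}$, using the $\mathcal{E}_\infty$--ring multiplication $\mathfrak m$ on $\bo\sm\tmf$ together with the Brown--Gitler pairings, and then discarding the Eilenberg--MacLane summands these pairings introduce.

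Set $m=2^i$. By the rewriting (\ref{e:filtration2}), $\Omega^{2^{i+1}-1}$ is the wedge of $\Omega^{2^i-1}$ with the new summands $\sus{8k}B(j)$ for $2^i\le j\le 2^{i+1}-1$ and $k\ge 0$; writing such a $j$ as $j=m+n$ with $0\le n\le 2^i-1$, the pairing $\BHZ{n}\sm\BHZ{m}\to\BHZ{m+n}$ of Theorem~\ref{t:BHZexist}(ii) induces a map $B(n)\sm B(m)=\sus{12(m+n)}(\BHZ{n}\sm\BHZ{m})\to B(m+n)$. Via Lemma~\ref{l:boequiv} the maps $\varrho_{2^i-1}$ and $g_{2^i}$ correspond to maps $\Omega^{2^i-1}\to\bo\sm\tmf$ and $B(m)\to\bo\sm\tmf$; smashing these together, composing with $\mathfrak m$, and transporting back along Lemma~\ref{l:boequiv} produces a $\bo$--module map $\Phi\colon \bo\sm\Omega^{2^i-1}\sm B(m)\to\bo\sm\tmf$. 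Its restriction to a summand $\bo\sm\sus{8k}B(n)\sm B(m)$ is then handled exactly as in Lemma~\ref{l:extendmap}: because $m=2^i$, Theorem~\ref{t:pairing} gives $\bo\sm\sus{8k}B(n)\sm B(m)\simeq\bigl(\bo\sm\sus{8k}B(m+n)\bigr)\vee K$ with $K$ a wedge of suspensions of $\es\FF_2$, and since $[\es\FF_2,\bo\sm\tmf]=0$ the restriction factors through the first summand, giving a $\bo$--module map $g_{m+n,k}\colon\bo\sm\sus{8k}B(m+n)\to\bo\sm\tmf$ which is injective on homology (by the compatibility of the pairing with the multiplication in $H_*\es\ZZ$ and the injectivity of $(\varrho_{2^i-1})_*$ and $(g_{2^i})_*$). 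I then set
\[
\varrho_{2^{i+1}-1}=\varrho_{2^i-1}\ \vee\ \bigvee_{n=0}^{2^i-1}\bigvee_{k\ge 0} g_{2^i+n,\,k}.
\]
Note that the higher floors ($k\ge 1$) cost nothing extra: $\Omega^{2^i-1}$ already carries every floor of each $B(n)$ with $n\le 2^i-1$, and smashing with the fixed $B(2^i)$ moves them to the corresponding floors of $B(2^i+n)$.

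The step I expect to be the real obstacle is showing that the assembled map is injective on \emph{all} of $H_*(\bo\sm\Omega^{2^{i+1}-1})$, not merely on each wedge summand --- i.e.\ that the images of the various $g_{2^i+n,k}$, and of $(\varrho_{2^i-1})_*$, are jointly independent in $H_*(\bo\sm\tmf)$. I would settle this with the algebraic splitting of Section~2. Proposition~\ref{p:A2BG} together with Lemmas~\ref{l:tmfA2}, \ref{l:MboA1} and Theorem~\ref{t:BHZexist}(i) decomposes $H_*\tmf$, as a right $\SA(1)$--module, into summands indexed by pairs $(i',j)$ with $0\le j\le i'$, the $(i',j)$--summand being $\sus{8i'+4j}H_*\BHZ{j}$ and sitting inside the weight--$8i'$ subspace $N_{8i'}^\tmf$; hence $H_*(\bo\sm\tmf)$ decomposes correspondingly, and $H_*(\bo\sm\Omega^{2^{i+1}-1})$ is exactly the part with $j\le 2^{i+1}-1$. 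Since the Brown--Gitler pairings are additive on weights, one checks that $g_{j,k}$ (with $j=m+n$) carries its source $\sus{12j+8k}H_*(\bo\sm\BHZ{j})$ isomorphically onto the $(j+k,\,j)$--summand: its weight is $8(2^i)+8(n+k)=8(j+k)$, and within that weight it lands in the Brown--Gitler index--$j$ piece. Distinct new summands therefore have disjoint images, and together with the injectivity of $\varrho_{2^i-1}$ on $\Omega^{2^i-1}$ this yields injectivity of $\varrho_{2^{i+1}-1}$. Verifying this weight--compatibility --- that the seed maps $g_{j,0}$ and the pairings interact correctly with the splitting of Section~2 --- is where the argument needs care; everything else is formal from Lemma~\ref{l:extendmap} and the $\mathcal{E}_\infty$--structure on $\bo\sm\tmf$.
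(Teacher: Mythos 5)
Your construction is exactly the paper's proof of this corollary: apply Lemma~\ref{l:extendmap} to $g_{2^i}$ and each restriction $g_{m,n}$ of $\varrho_{2^i-1}$, then take the wedge of the resulting maps $g_{2^i+m,n}$. The joint-injectivity discussion you add goes beyond what the paper records (it only asserts injectivity summand by summand); your weight argument is the right idea but should be phrased via the weight \emph{filtration} rather than the direct sum decomposition, since the image of $g_{j,k}$ in $H_*\bo\otimes H_*\tmf$ is only contained in weights $\leq 8(j+k)$ (e.g.\ $g_{0,1}$ already has components in $H_8\bo\otimes N^\tmf_0$), and one identifies the summands only after passing to the associated graded.
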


\begin{proof}
For $0 \leq m \leq 2^i - 1$ and $n \geq 0$, there are $\bo$-module maps $g_{2^i+m,n}$ inducing an injection in homology.  These maps are obtained by applying Lemma~\ref{l:extendmap} to $g_{2^i}$ and the restriction of $\varrho_{2^i-1}$ to the summand
\[
g_{m,n} : \bo \sm \sus{8n} B(m) \to \bo \sm \tmf
\]
The map $\varrho_{2^{i+1}-1}$ is the wedge of these maps.
\end{proof}

The following observation will simplify our calculations inside the Adams spectral sequence.

\begin{lemma}\label{l:homextendE2}
Let $X$ and $Y$ be spectra.  Suppose $\mathcal{F}: \bo \sm X \to \bo \sm Y$ is given by the composite $(\mathsf{m}_\bo \sm Y)\circ(\bo \sm f)$ for some map $f: X \to \bo \sm Y$.  Then $\mathcal{F}_* (r x) = r \mathcal{F}_* (x)$ if $r \in \bo_*$ and $x \in \bo_* X$.
\end{lemma}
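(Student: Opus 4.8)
The plan is to observe that $\mathcal{F}$ is precisely the $\bo$-module extension of $f$ furnished by Lemma~\ref{l:boequiv} --- it is the image of $f\in[X,\bo\sm Y]$ under the correspondence $g\mapsto(\mathsf{m}_\bo\sm 1)\circ(1\sm g)$ appearing there --- and hence that $\mathcal{F}$ is a map of $\bo$-module spectra. Granting this, the conclusion is formal: the $\bo_*$-action on $\bo_*X=\pi_*(\bo\sm X)$ is by definition the composite of the external pairing $\pi_*\bo\otimes\pi_*(\bo\sm X)\to\pi_*(\bo\sm\bo\sm X)$ with $(\mathsf{m}_\bo\sm X)_*$, and likewise for $\bo_*Y$; since $\mathcal{F}$ commutes with the two action maps $\mathsf{m}_\bo\sm X$ and $\mathsf{m}_\bo\sm Y$, applying $\pi_*$ and invoking naturality of the external pairing gives $\mathcal{F}_*(rx)=r\,\mathcal{F}_*(x)$ for $r\in\bo_*$ and $x\in\bo_*X$.

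So the only point that genuinely needs checking is that $\mathcal{F}=(\mathsf{m}_\bo\sm Y)\circ(\bo\sm f)$ respects the module structures, i.e.\ that
\[
\mathcal{F}\circ(\mathsf{m}_\bo\sm X)\;=\;(\mathsf{m}_\bo\sm Y)\circ(\bo\sm\mathcal{F})\colon\ \bo\sm\bo\sm X\longrightarrow\bo\sm Y .
\]
I would establish this by two rewrites. First, functoriality of the smash product (the interchange law) lets one slide $f$, which acts on the $X$-coordinate, past $\mathsf{m}_\bo$, which acts on the two $\bo$-coordinates, giving $(\bo\sm f)\circ(\mathsf{m}_\bo\sm X)=(\mathsf{m}_\bo\sm\bo\sm Y)\circ(\bo\sm\bo\sm f)$; indeed both composites are just $\mathsf{m}_\bo\sm f$. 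Second, associativity of $\mathsf{m}_\bo$ rewrites $(\mathsf{m}_\bo\sm Y)\circ(\mathsf{m}_\bo\sm\bo\sm Y)$ as $(\mathsf{m}_\bo\sm Y)\circ(\bo\sm\mathsf{m}_\bo\sm Y)$, and since $(\bo\sm\mathsf{m}_\bo\sm Y)\circ(\bo\sm\bo\sm f)$ is visibly $\bo\sm\mathcal{F}$, chaining the two rewrites produces the displayed identity. A reader who prefers to argue with representatives can run the same pair of rewrites on sphere-level maps: with $x$ represented by $S^n\to\bo\sm X$ and $r$ by $S^m\to\bo$, so that $rx$ is represented by $(\mathsf{m}_\bo\sm X)\circ(r\sm x)$, one finds that $\mathcal{F}\circ(\mathsf{m}_\bo\sm X)\circ(r\sm x)$ coincides with $(\mathsf{m}_\bo\sm Y)\circ(r\sm(\mathcal{F}\circ x))$, which represents $r\,\mathcal{F}_*(x)$.

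The step I would flag as the ``main obstacle'' is, in truth, barely one: it is the purely clerical matter of tracking which smash factor each map operates on while writing out the interchange and associativity rewrites. No homotopy-theoretic input beyond the ring-spectrum axioms for $\bo$ and functoriality of $\sm$ enters; once one notices that $\mathcal{F}$ already lies in $[\bo\sm X,\bo\sm Y]_\bo$ via Lemma~\ref{l:boequiv}, the lemma is immediate, and the explicit diagram is recorded only to keep the statement self-contained.
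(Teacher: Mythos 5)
Your proposal is correct and matches the paper's argument, which consists entirely of the observation that $\mathcal{F}$ is by construction a $\bo$-module map (exactly your identification of $\mathcal{F}$ as the image of $f$ under the correspondence of Lemma~\ref{l:boequiv}). The interchange/associativity rewrites you supply are just the explicit verification the paper leaves implicit.
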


\begin{proof}
By construction, the composite $\mathcal{F}$ is a $\bo$-module map.
\end{proof}

In particular, the $\bo$-module map $\varrho_{2^{i+1}-1}$ constructed in Lemma~\ref{l:extendmap} induces a map in homotopy groups in Adams filtration $s=0$.  The above lemma allows us to apply the $\bo_*$-module structure to extend the morphism into positive Adams filtrations.  To complete the inductive step it suffices to construct a map $g_{2^i} : B(2^i) \to \bo \sm \tmf$ inducing an injection on homology.  Indeed, we can then apply Corollary~\ref{c:extendmap} to extend $\varrho_{2^i-1}$ to a $\bo$-module map
$\varrho_{2^{i+1}-1}: \bo \sm \Omega^{2^{i+1}-1} \to \bo \sm \tmf$.  

To construct  $g_{2^i}$, we will use $g_{2^{i-1}}$ supplied by the inductive hypothesis.  Once again we will attempt to use the pairing of integral Brown-Gitler spectra:
\begin{equation}\label{e:pairing2power}
\BHZ{2^{i-1}} \sm \BHZ{2^{i-1}} \to \BHZ{2^i}
\end{equation}
to construct a map $\bo \sm \BHZ{2^i} \to \bo \sm \tmf$.  Unfortunately, Lemma~\ref{l:extendmap} will not apply.  Indeed, the above pairings (\ref{e:pairing2power}) are not surjective in homology since the element corresponding to $\zeta_{i+3}$ inside $H_* \BHZ{2^i}$ is indecomposible.  To handle this case, we turn to a lemma of Mahowald~\cite{Mah81} made precise by Davis~\cite{Davis83}:
\begin{lemma}[Davis \cite{Davis83}]\label{l:main-fiber}
If $n$ is a power of $2$, let $F_n = \sus{8n-5} \MM \sm \BHZ{1}$.  There is a map $F_n \xrightarrow{j} \bo \sm \BHZ{n} \sm \BHZ{n}$ such that the cofibre of the composite
\[
\delta:\bo \sm F_n \xrightarrow{1 \sm j} \bo \sm \bo \sm \BHZ{n} \sm \BHZ{n} \xrightarrow {\mathsf{m}_\bo \sm 1 \sm 1} \bo \sm \BHZ{n} \sm \BHZ{n}
\]
is equivalent modulo suspensions of $\es \FF_2$ to $\bo \sm \BHZ{2n}$.
\end{lemma}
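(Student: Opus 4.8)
The plan is to reduce the statement to a computation in mod-$2$ homology and to build $j$ out of the Brown-Gitler pairing together with the splitting of $\bo\sm\BHZ{1}$. Since $\delta$ is by construction a map of $\bo$-module spectra --- it is the $\bo$-linearisation of $j$ in the sense of Lemma~\ref{l:boequiv} --- its cofibre is a connective $\bo$-module spectrum of finite type. Following the $\bo$-resolution technology of Mahowald~\cite{Mah81} and Davis~\cite{Davis83}, such a spectrum is determined, up to wedge summands that are suspensions of $\es\FF_{2}$, by its mod-$2$ cohomology as a module over $\SA$. It therefore suffices to produce $j$ and then to show that $H^{*}\mathrm{cofib}(\delta)\cong H^{*}(\bo\sm\BHZ{2n})$ as $\SA$-modules modulo a free $\SA(1)$-summand --- equivalently, that on homology $H_{*}\mathrm{cofib}(\delta)=\mathrm{coker}(\delta_{*})\oplus\Sigma\ker(\delta_{*})$ agrees with $H_{*}(\bo\sm\BHZ{2n})=H_{*}\bo\otimes H_{*}\BHZ{2n}$ modulo an extended (i.e.\ $\SA(1)_{*}$-free) comodule.

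To guide the construction, smash the pairing $\mu: \BHZ{n}\sm\BHZ{n}\to\BHZ{2n}$ of Theorem~\ref{t:BHZexist}(ii) with $\bo$, obtaining a $\bo$-module map $\bar\mu: \bo\sm\BHZ{n}\sm\BHZ{n}\to\bo\sm\BHZ{2n}$ which on homology is $1\otimes\mu_{*}$. By the proof of Lemma~\ref{l:tmfA2}, $\mu_{*}$ multiplies weight-$\leq 2n$ monomials of $H_{*}\es\ZZ$ into weight-$\leq 4n$ monomials, and a weight count shows it is onto all such monomials except the unique indecomposable generator $\zeta$ of weight precisely $4n$, which sits in degree $8n-1$; hence $\mathrm{coker}(\bar\mu_{*})\cong H_{*}\bo\otimes\langle\zeta\rangle$. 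As $\BHZ{n}$ is a finite complex with top cell in degree $4n-1$, the source $\BHZ{n}\sm\BHZ{n}$ has top cell in degree $8n-2$, one dimension short of $\zeta$, so no map out of a single suspension of $\BHZ{1}$ can carry the defect. The spectrum $F_{n}=\sus{8n-5}\MM\sm\BHZ{1}$ has cells in degrees $8n-5,8n-4,8n-3,8n-2,8n-2,8n-1$, with the top cell attached to a cell in degree $8n-2$ by the Bockstein coming from $\MM$, and $\bo\sm F_{n}\simeq\sus{8n-5}(\bsp\sm\MM)\vee(\textnormal{wedge of suspensions of }\es\FF_{2})$, using $\bo\sm\BHZ{1}\simeq K\vee\bsp$ from Theorem~\ref{t:pairing} (with $K$ a wedge of suspensions of $\es\FF_{2}$) and $\bo\sm\MM$ the mod-$2$ reduction of $\bo$. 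One constructs $j$ cell by cell on $F_{n}$, sending its bottom cell to a class of $\pi_{8n-5}(\bo\sm\BHZ{n}\sm\BHZ{n})$ visible through the $\bsp$-summand and extending over the attaching maps; the obstructions lie in $[\es\FF_{2},\bo\sm\BHZ{n}\sm\BHZ{n}]=0$ and in the known homotopy of the Adams covers of Theorem~\ref{t:pairing}, and $j$ is chosen so that $\delta_{*}$ has exactly the rank forced by the Euler characteristic of the intended cofibre sequence.

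The decisive and, I expect, hardest step is the homological identification of $\mathrm{cofib}(\delta)$. One computes $H_{*}\mathrm{cofib}(\delta)$ from the long exact sequence and matches it, as an $\SA_{*}$-comodule, with $H_{*}(\bo\sm\BHZ{2n})$ modulo an extended summand, using the $\SA_{*}$-coaction formulas on $H_{*}\es\ZZ$ from the proof of Lemma~\ref{l:tmfA2} --- in particular $\zeta\cdot\Sq{1}=(\zeta')^{2}$, where $\zeta'$ is the next generator down and already lies in $\mathrm{im}(\mu_{*})$ --- together with a weight count in the spirit of Lemmas~\ref{l:tmfA2} and~\ref{l:MboA1}. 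The argument should show that $\delta$ is set up so that the $\SA(1)_{*}$-free portion of $\ker(\bar\mu_{*})\otimes H_{*}\bo$ splits off $\mathrm{cofib}(\delta)$ as extended summands, while the non-extended part of $H_{*}\mathrm{cofib}(\delta)$ is precisely $H_{*}(\bo\sm\BHZ{2n})$: the top-weight class $\zeta$, absent from $\mathrm{im}(\bar\mu_{*})$, is supplied instead by the connecting homomorphism of the cofibre sequence applied to the top-dimensional part of $\bo\sm F_{n}$. This is also where the Moore spectrum is essential: the $\Sq{1}$ attaching the top cell of $F_{n}$ to a cell in degree $8n-2$ is forced by the relation $\zeta\cdot\Sq{1}=(\zeta')^{2}$ and the integral-homology structure of $\BHZ{2n}$ near its top dimension --- a wedge $S^{0}\vee S^{1}$ in place of $\MM$ would not produce it. Once this comodule identification is established, the structure theory quoted in the first paragraph upgrades it to the homotopy equivalence $\mathrm{cofib}(\delta)\simeq\bo\sm\BHZ{2n}$ modulo wedges of $\es\FF_{2}$.
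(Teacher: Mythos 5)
The paper offers no proof of this lemma at all --- it is quoted from Davis~\cite{Davis83} as a black box --- so the only meaningful comparison is with Davis's published argument. Your sketch does identify the right ingredients: the pairing $\BHZ{n}\sm\BHZ{n}\to\BHZ{2n}$, the single weight-$4n$ indecomposable ($\zeta_{j+3}$ for $n=2^{j}$, in degree $8n-1$) that the pairing misses in homology, and the fact that the Moore-spectrum factor of $F_{n}$ is forced by the $\Sq{1}$-action near the top of $H^{*}\BHZ{2n}$. But two load-bearing steps are missing, and as written the argument does not close.

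First, the reduction in your opening paragraph --- that a connective $\bo$-module spectrum of finite type is determined up to wedge summands of $\es\FF_{2}$ by its mod-$2$ cohomology over $\SA$ --- is not a theorem, and the lemma cannot be obtained by merely matching $\SA_{*}$-comodules. What the $\bo$-resolution technology actually supplies is Theorem~\ref{t:pairing}: both ends of $\delta$ are identified as wedges of Adams covers ($\bo\sm\BHZ{n}\sm\BHZ{n}\simeq K\vee\bo^{4n-2}$ and $\bo\sm F_{n}\simeq\sus{8n-5}(\bsp\sm\MM)\vee K'$ for $n$ a power of $2$). To say anything about the cofibre you must produce a comparison map: one shows that the $\bo$-module extension $\bar\mu$ of the Brown--Gitler pairing satisfies $\bar\mu\circ\delta\simeq\ast$, so that $\bar\mu$ factors through the cofibre, and then checks that this factorization is an isomorphism on Adams $E_{2}$-terms by an explicit $\EE_{\SA(1)}$ chart comparison of $\bo^{4n-2}$, $\sus{8n-5}\bsp\sm\MM$ and $\bo^{4n-1}$ modulo the Eilenberg--MacLane pieces. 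Your proposal constructs no such map and verifies no such nullhomotopy. Second, the construction of $j$ is where the real content lives and is not carried out: obstructions to extending over the cells of $F_{n}$ lie in homotopy groups of $\bo\sm\BHZ{n}\sm\BHZ{n}$ (not in $[\es\FF_{2},-]$, which is irrelevant here), and these groups are not zero; one must exhibit specific classes of $\pi_{*}(\bo^{4n-2})$ in degrees $8n-5$ through $8n-1$ compatible with the attaching maps of $\MM\sm\BHZ{1}$ and with the required effect of $\delta_{*}$ on homology. ``Chosen so that $\delta_{*}$ has exactly the rank forced by the Euler characteristic'' is a wish, not a construction. A small symptom that the bookkeeping has not actually been done: a class of $\ker\delta_{*}$ in degree $d$ contributes to degree $d+1$ of the cofibre, so the class standing in for $\zeta_{j+3}\in H_{8n-1}$ must come from degree $8n-2$ of $\bo\sm F_{n}$, not from the top cell of $F_{n}$ in degree $8n-1$ as you assert.
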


Define $\mathfrak{m}_{i-1}: \bo \sm B(2^{i-1}) \sm B(2^{i-1})  \to \bo \sm \tmf$ to be the $\bo$-module map induced by the composite $\mathfrak{m} \circ \left( g_{2^{i-1}} \sm g_{2^{i-1}}\right)$.   With Lemma~\ref{l:main-fiber} in mind, consider the diagram:
\[
\xymatrix{
\bo \sm \sus{2^{i+4}-5} \MM \sm \BHZ{1} \ar[r]^{\delta} &\bo \sm B(2^{i-1}) \sm B(2^{i-1}) \ar[d]_{\mathfrak{m}_{i-1}} \ar[r]&\bo \sm B(2^i) \ar@{-->}[dl]^{g_{2^i}}\\
&\bo \sm \tmf &
}
\]
It suffices to show the composite $\mathfrak{m}_{i-1} \delta$ is nulhomotopic, since then $\mathfrak{m}_{i-1}$ would then extend to the desired map $g_{2^i}$.  The following theorem is essentially due to Davis \cite[Prop. 2.8]{Davis83}, however modified to our context.

\begin{theorem}[Davis, \cite{Davis83}] \label{t:main-reduction}
Suppose $g_{2^{i-1}} : \bo \sm B(2^{i-1}) \to \bo \sm \tmf$ induces an injection on homology.  Then
\begin{equation}
\pi_{2^{i+4}-4}\left( \bo \sm B(2^{i-1})\sm B(2^{i-1})\right) \cong \ZZ_{(2)}
\end{equation}
with generator $\alpha_{2^{i+4}-4}$ whose image under $(\mathfrak{m}_{i-1})_\sharp$ is divisible by $2$.
\end{theorem}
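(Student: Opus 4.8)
The plan is to follow Davis's strategy from \cite[Prop. 2.8]{Davis83} adapted to the $\tmf$ setting. First I would compute the homotopy group $\pi_{2^{i+4}-4}\bigl(\bo \sm B(2^{i-1}) \sm B(2^{i-1})\bigr)$ directly using the Adams spectral sequence. By Theorem~\ref{t:pairing} applied to $\overline{n} = (2^{i-1}, 2^{i-1})$, we have $\bo \sm \BHZ{2^{i-1}} \sm \BHZ{2^{i-1}} \simeq K \vee \bo^{2^{i+1}-2\alpha(2^{i-1})} = K \vee \bo^{2^{i+1}-2}$, since $\alpha(2^{i-1}) = 1$ and $|\overline{n}| = 2^i$ is even. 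After accounting for the suspension $B(j) = \sus{12j}\BHZ{j}$ in the two factors, the relevant cell where we look for $\pi_{2^{i+4}-4}$ sits at the bottom of the Adams cover $\bo^{2^{i+1}-2}$; the summands of $K$ (wedge summands of $\es\FF_2$) contribute nothing in this even degree to a $\ZZ_{(2)}$-tower, and dimension-counting against the charted classes shows only one $\ZZ$-tower passes through stem $2^{i+4}-4$. This yields the isomorphism with $\ZZ_{(2)}$ and pins down the generator $\alpha_{2^{i+4}-4}$ as the bottom class of that tower.

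The second and harder step is to show that $(\mathfrak{m}_{i-1})_\sharp(\alpha_{2^{i+4}-4})$ is divisible by $2$. Here I would use the hypothesis that $g_{2^{i-1}}$ induces an injection on homology, together with Lemma~\ref{l:homextendE2}: since $\mathfrak{m}_{i-1}$ is the $\bo$-module map induced by $\mathfrak{m} \circ (g_{2^{i-1}} \sm g_{2^{i-1}})$, it respects the $\bo_*$-module structure. The key point is that the source class $\alpha_{2^{i+4}-4}$, while generating a $\ZZ_{(2)}$, lives in Adams filtration detecting a class that the multiplication $\mathfrak{m}_{i-1}$ cannot hit on the nose in $\bo_*\tmf$. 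Concretely, one tracks the image in the Adams $E_2$-page: the target stem $2^{i+4}-4 \equiv 4 \bmod 8$ in $\bo \sm \tmf$ carries a $\ZZ$-tower supporting $\eta$-multiplication, but by the weight/dimension analysis underlying Lemma~\ref{l:tmfA2} and Proposition~\ref{p:A2BG}, the product of the two weight-$(2^{i-1})$ pieces lands in the weight-$2^i$ piece $N_{8\cdot 2^{i-1}}^\tmf$, and the decomposable nature of this product forces the image of the bottom class to be twice a generator — precisely because $\zeta_{i+3} \in H_*\BHZ{2^i}$ is indecomposable and hence not in the image of the pairing (\ref{e:pairing2power}) in homology.

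The main obstacle I anticipate is the divisibility-by-$2$ claim: establishing that $(\mathfrak{m}_{i-1})_\sharp$ misses the generator requires careful bookkeeping in the Adams spectral sequence, comparing Adams filtrations of the bottom class of the source tower against the image of the multiplication, and invoking the injectivity of $g_{2^{i-1}}$ on homology to control the behavior of $\mathfrak{m}_{i-1}$ on the Adams $E_2$-level. One must verify that the nonzero homology class detecting $\alpha_{2^{i+4}-4}$ maps under $(\mathfrak{m}_{i-1})_*$ to a class that is a square (hence in the image of the Frobenius, hence divisible by $2$ after lifting to homotopy), using the compatibility of the Brown-Gitler pairings with multiplication in $H_*\es\ZZ$ from Theorem~\ref{t:BHZexist}(ii) and the analogous structure for $H_*\tmf$. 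The remaining verification that $\pi_{2^{i+4}-4}$ of the source is exactly $\ZZ_{(2)}$ with no additional $2$-torsion is routine given the explicit $\bo$ and $\bsp$ charts and Theorem~\ref{t:pairing}.
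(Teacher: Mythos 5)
Your first step (identifying $\pi_{2^{i+4}-4}(\bo \sm B(2^{i-1}) \sm B(2^{i-1}))$ via Theorem~\ref{t:pairing} as the bottom $\ZZ_{(2)}$ of the Adams cover $\bo^{2^{i+1}-2}$, modulo checking that $K$ contributes nothing in that dimension) is consistent with what is needed and with what the paper, following Davis, asserts. The gap is in the second step, the divisibility by $2$, which is the entire content of the theorem. Your argument runs: the homology class detecting $\alpha_{2^{i+4}-4}$ maps under $(\mathfrak{m}_{i-1})_*$ to a decomposable (a ``square''), hence is ``in the image of the Frobenius, hence divisible by $2$ after lifting to homotopy.'' This inference is not valid. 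In mod $2$ homology there is no Frobenius-to-Verschiebung mechanism that converts ``square'' into ``divisible by $2$ in $\pi_*$''; for example $\zeta_1^{16}=(\zeta_1^8)^2$ is a square in $H_*\tmf$ without any corresponding homotopy class being forced to be even. Likewise, the indecomposability of $\zeta_{i+3}$ in $H_*\BHZ{2^i}$ is used in the paper only to explain why the pairing (\ref{e:pairing2power}) fails to be surjective on homology, so that Lemma~\ref{l:extendmap} does not apply and an obstruction argument is needed at all; it does not by itself control the integral homotopy class $(\mathfrak{m}_{i-1})_\sharp(\alpha_{2^{i+4}-4})$. To make an Adams-filtration argument of the kind you sketch rigorous you would at minimum have to pin down the filtration of $\alpha_{2^{i+4}-4}$, the filtration of the bottom of the target $\ZZ$-tower in stem $2^{i+4}-4$ of $\bo\sm\tmf$, and show the induced map on the relevant $E_2$-group vanishes --- none of which is done, and it is not clear it can be done by pure bookkeeping.

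The missing idea is structural, and it is where the factor of $2$ actually comes from in Davis's Proposition 2.8: because $\bo\sm\tmf$ is an $\mathcal{E}_\infty$-ring spectrum, $\mathfrak{m}_{i-1}$ factors through the quadratic construction
\[
\bo \sm B(2^{i-1}) \sm B(2^{i-1}) \xrightarrow{\ j\ } \bo \sm D_2(B(2^{i-1})) \longrightarrow \bo \sm \tmf,
\qquad D_2(X) = S^1 \ltimes_{\Sigma_2}(X \sm X),
\]
and one shows (this is the part genuinely due to Davis) that $j_\sharp$ already sends the generator $\alpha_{2^{i+4}-4}$ to twice a class in $\pi_{2^{i+4}-4}(\bo \sm D_2(B(2^{i-1})))$; the divisibility in $\bo\sm\tmf$ follows by naturality, independently of any homology computation in the target. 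You should replace your Frobenius argument with this factorization (or an equivalent transfer argument for the free $\Sigma_2$-action), and then either cite Davis for the behavior of $j_\sharp$ or reproduce his computation for the quadratic construction.
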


\begin{proof}
Since $\bo \sm \tmf$ has the structure of an $\mathcal{E}_\infty$-ring spectrum, the map $\mathfrak{m}_{i-1}$ factors through the quadratic construction on $B(2^{i-1})$, i.e., there is a map $j$ making the the following diagram commute:
\[
\xymatrix{
& \bo \sm D_2(B(2^{i-1})) \ar[d]\\
\bo \sm B(2^{i-1}) \sm B(2^{i-1}) \ar[ur]^{j} \ar[r]_<<<<<<<<<<{\mathfrak{m}_{i-1}} &\bo \sm \tmf 
}
\]
where  
\[
D_2(B(2^{i-1})) = S^1 \ltimes_{\Sigma_2} (B(2^{i-1}) \sm B(2^{i-1}) ).
\]
Here the $\Sigma_2$-action on $S^1$ is the antipode and the action on the smash product interchanges factors.  Using this factorization, it suffices to show that the induced map $j_\sharp$ in homotopy sends the generator in dimension $2^{i+4}-4$ to twice an element of the homotopy of the quadratic construction.  This is proved by Davis~\cite{Davis83}.  
\end{proof}

\begin{proof}[Proof that Theorem~\ref{t:main-reduction} implies Theorem~\ref{t:main-theorem}]
Let $[x_i] \in \pi_i (\bo \sm \tmf)$ for $i = 0,8,12$ denote the classes in bidegree $(i,0)$ in the $E_2$-term displayed in Figure~\ref{f:bostmf}.  The class $[x_{12}]$ does not support action by $\eta$ so that $x_{12}$ extends to a map $B(1) \to \bo \sm \tmf$.   Upon smashing with $\bo$, we get maps
\begin{align*}
g_0 : \bo \sm B(0) &\to \bo \sm \tmf \\
g_{0,1} : \sus{8} \bo \sm B(0) &\to \bo \sm \tmf \\
g_{1}:  \bo \sm B(1) &\to \bo \sm \tmf 
\end{align*}
inducing injections in homology.   In particular, Lemma~\ref{l:extendmap} extends these to a $\bo$-module map $\varrho_1: \bo \sm \Omega^1 \to \bo \sm \tmf$ which is also an injection on homology.  Lemma~\ref{l:homextendE2} extends this morphism to positive Adams filtrations.  Figure~\ref{f:bostmf} demonstrates that modulo possible order 2 elements on the zero line, this map accounts for all homotopy classes through the 23-stem.  Hence, it is a stable $\SA$-equivalence in this range.

For the purpose of induction, assume the existance of a $\bo$-module map $\varrho_{2^i-1} : \bo \sm \Omega^{2^i - 1} \to \bo \sm \tmf$ inducing a stable $\SA$-equivalence through the $(12(2^i)-1)$-stem.  In particular, there is a map $g_{2^{i-1}} : \bo \sm B(2^{i-1}) \to \bo \sm \tmf$ of $\bo$-module spectra inducing an injection on homology groups.  Define $\mathfrak{m}_{i-1}$ and $\delta$ as above.  We will show $\mathfrak{m}_{i-1} \delta \simeq \ast$.

\begin{figure}[htb]
\[
\xygraph{
!{<0cm,0cm>;<1cm,0cm>:<0cm,1cm>::}
!{(0,-1)}*{0} !{(1,-1)}*{1} !{(2,-1)}*{2} !{(3,-1)}*{3} !{(4,-1)}*{4} 
!{(0,0)}*+{\circ}="c0"
!{(1,0)}*+{\circ}="c1"
!{(2,0)}*+{\circ}="c2"
!{(3,0)}*+{\circ}="c31" !{(3,1)}*+{\circ}="c32"
!{(4,1)}*+{\circ}="c4"
"c0"-"c1" "c2"-"c31" "c32"-"c4"
"c0"-@/_1pc/"c2" "c1"-"c32" "c2"-"c4"
}
\]
\caption{$H^*(\MM \sm \BHZ{1})$}
\label{f:MB1}
\end{figure}
Figure~\ref{f:MB1} shows the cell diagram for $H^* (\MM \sm \BHZ{1})$.  Since there are no elements of positive Adams filtration in stems congruent to $\{5,6,7\} \mod 8$ in the Adams spectral sequence converging to $\pi_* (\bo \sm \tmf)$, the composite $\mathfrak{m}_{i-1}\delta$ restricts to a map $\sus{2^{i+4}-5} \MM \to \bo \sm \tmf$.  Consider the composite
\[
S^{2^{i+4}-5} \xrightarrow{a_0} \sus{2^{i+4}-5} \MM \sm \BHZ{1} \xrightarrow{\mathfrak{m}_{i-1}\delta} \bo \sm \tmf
\]
restricting $\mathfrak{m}_{i-1}\delta$ to the bottom cell of $\sus{2^{i+4}-5} \MM$.  There are no elements of positive Adams filtration in stems congruent to $3 \mod 8$ so this restriction extends to the top cell
\[
S^{2^{i+4}-4}  \xrightarrow{a_1} \sus{2^{i+4}-5} \MM \sm \BHZ{1} \xrightarrow{\mathfrak{m}_{i-1}\delta} \bo \sm \tmf.
\]
Theorem~\ref{t:main-reduction} indicates that  the class $(\mathfrak{m}_{i-1})_\sharp(\delta a_1)$ is divisible by 2.  Hence, this map is nulhomotopic.  Applying Corollary~\ref{c:extendmap} gives the result.
\end{proof}

\section{The $\bo$-homology of  $\tmf$}

Both $\bo$ and $\tmf$ have the structure of $\mathcal{E}_\infty$-ring spectra, so that the smash product $\bo \sm \tmf$ also inherits such a structure.  The splitting of $\bo \sm \tmf$ into pieces involving integral Brown-Gitler spectra gives a nice description of its structure as a ring spectrum.  Indeed, the pairing of the $\BHZ{j}$ is compatible with multiplication inside $H_* \es \ZZ$ of which $H_* \tmf$ is a subring.  In particular, the pairings of the integral Brown-Gitler spectra induce the ring structure of $\bo \sm \tmf$.  The induced structure on homotopy groups is given by the following theorem:

\begin{theorem}
There is an isomorphism of graded $\bo_*$-algebras
\begin{equation}
\pi_* (\bo \sm \tmf) \cong \frac{\bo_* \left[\sigma, b_i, \mu_i \, \vert \, i \geq 0 \right]}{(\mu b_i^2 - 8 b_{i+1}, \mu b_i - 4\mu_i, \eta b_i )}\oplus F
\end{equation}
where $\vert \sigma \vert = 8$, $\vert b_i \vert = 2^{i+4}-4$, $\vert \mu_i \vert = 2^{i+4}$ and $F$ is a direct sum of $\FF_2$ in varying dimensions.
\end{theorem}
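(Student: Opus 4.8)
The plan is to read the additive structure off Theorem~\ref{t:main-theorem} together with Davis's Theorem~\ref{t:pairing}, to name multiplicative generators, to extract the three relations from the multiplicativity of the Brown--Gitler pairings and the divisibility of Theorem~\ref{t:main-reduction}, and to finish with a Poincar\'e-series count. From Theorem~\ref{t:main-theorem} one has
\[
\pi_*(\bo\sm\tmf)\;\cong\;\bigoplus_{0\le j\le i}\pi_{*-8i-4j}(\bo\sm\BHZ{j}),
\]
and Theorem~\ref{t:pairing} with $\overline n=(j)$ identifies each $\bo\sm\BHZ{j}$, up to a wedge of suspensions of $\es\FF_2$, with an Adams cover of $\bo$ (for $j$ even) or of $\bsp$ (for $j$ odd). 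Collecting all of the $\es\FF_2$-summands, over every pair $(i,j)$, into a single graded $\FF_2$-vector space $F$ and setting it aside, the remainder is a direct sum of suspensions of $\bo_*$ and $\bsp_*$, each one cyclic over $\bo_*$ on the bottom class of its cover. This is the underlying $\bo_*$-module of the claimed presentation, and one records --- as with Davis~\cite{Davis83} --- that $F$ is a square-zero ideal.

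Next I would name the generators. Let $\sigma\in\pi_8$ be the bottom class of the $(i,j)=(1,0)$ summand $\sus{8}\bo$; since $\BHZ{0}\simeq S^0$ and the pairing $\BHZ{0}\sm\BHZ{m}\to\BHZ{m}$ is the unit, multiplication by $\sigma$ carries the $(i,j)$-summand isomorphically onto the $(i+1,j)$-summand, so the $j=0$ summands together form the free $\bo_*$-module $\bo_*[\sigma]$. Let $b_0\in\pi_{12}$ be the class $[x_{12}]$ appearing in the proof of Theorem~\ref{t:main-theorem} (the bottom class of the $(1,1)$-summand), and inductively let $b_i\in\pi_{2^{i+4}-4}$ be the class detecting the Adams cover produced when $g_{2^i}$ is built from $g_{2^{i-1}}$ via Lemma~\ref{l:main-fiber}. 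Write $\mu\in\pi_4$ for the image of the generator of $\bo_4\cong\ZZ$ (the class usually denoted $\alpha$, renamed here to avoid clashing with the digit-sum function). Inside the cover that contains $b_i$ the product $\mu b_i$ is divisible by $4$ --- the homotopy manifestation of the relation $\alpha^2=4\beta$ in $\bo_*$ (in $\Ext_{\SA(1)}$ it is $v^2=h_0^2w$, together with the fact that the relevant cover carries an $\Ext^1$-generator in the appropriate stem) --- so we may define $\mu_i\in\pi_{2^{i+4}}$ by $4\mu_i=\mu b_i$.

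To read off the relations, recall that by Theorem~\ref{t:BHZexist}(ii) the multiplication $\mathfrak{m}$ on $\bo\sm\tmf$, restricted to products of summands, is --- modulo $F$ --- induced by the Brown--Gitler pairings and hence governed by the multiplication in $H_*\tmf\subset H_*\es\ZZ$; in particular a square-free monomial $\sigma^a b_{i_1}\cdots b_{i_k}$ with $i_1<\dots<i_k$ lands in the summand indexed by $j=2^{i_1}+\dots+2^{i_k}$, each $j$ occurring exactly once, which is the binary bookkeeping of Corollary~\ref{c:extendmap}. The relation $\eta b_i=0$ holds because $b_i$ maps to a multiple of $\alpha\beta^{\bullet}$ in $\bo_*$ of its cover and $\eta\alpha=0$ (for $i=0$ this reads $\pi_1\bsp=0$); $\mu b_i=4\mu_i$ is the definition of $\mu_i$; and $\mu b_i^2=8b_{i+1}$ comes from Theorem~\ref{t:main-reduction}: that theorem identifies $\pi_{2^{i+5}-4}(\bo\sm B(2^i)\sm B(2^i))$ with $\ZZ$ and supplies a generator whose image under $(\mathfrak{m}_i)_\sharp$ is divisible by $2$, while one checks that $\mu\,(b_i\otimes b_i)$ is $4$ times that generator --- the extra $4$ recording the $h_0^2$-divisibility forced by $\alpha(\overline n)=2$ in the cover $\bo^{2^{i+2}-2}\subseteq\bo\sm\BHZ{2^i}\sm\BHZ{2^i}$ --- so that $\mu b_i^2=(\mathfrak{m}_i)_\sharp\bigl(\mu\,(b_i\otimes b_i)\bigr)=8b_{i+1}$.

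Finally, a Poincar\'e-series comparison in each degree, carried out summand by summand via the cover identifications above, shows the evident surjection from the presented $\bo_*$-algebra onto $\pi_*(\bo\sm\tmf)$ to be an isomorphism. I expect the main obstacle to be exactly this last step together with the constants $4$ and $8$: one must pin down which Adams cover each $\bo\sm\BHZ{2^i}$ and $\bo\sm\BHZ{2^i}\sm\BHZ{2^i}$ is, locate $b_i$, $\mu_i$ and $\mu b_i^2$ inside it, verify the precise $2$-adic divisibilities, and confirm that these relations exhaust all relations --- in effect importing Davis's $\bo$-module calculations for $M{\rm O}\langle 8 \rangle\sm\bo$ into the present context. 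Everything additive, by contrast, is immediate from the splitting already established.
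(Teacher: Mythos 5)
Your overall strategy matches the paper's: both arguments use the splitting of Theorem~\ref{t:main-theorem} together with the pairings of Theorem~\ref{t:pairing} to reduce generation to the classes carried by $\sus{8}\bo \sm B(0)$ and the $\bo \sm B(2^i)$, name the generators $\sigma$, $b_i$, $\mu_i$ there, and lean on Davis for the relations. The one real divergence is where the relation $\mu b_i^2 = 8b_{i+1}$ and the indecomposability of $b_{i+1}$, $\mu_{i+1}$ come from. The paper gets both by running the long exact sequence of $\EE_{\SA(1)}$-groups induced by the cofibre sequence of Lemma~\ref{l:main-fiber},
\[
\bo \sm B(2^i)\sm B(2^i) \to \bo \sm B(2^{i+1}) \to \bo \sm \sus{2^{i+5}-4}\MM\sm\BHZ{1},
\]
and reading the answer off the resulting chart (Figure~\ref{f:EB2}): the black classes are the decomposables, the two red classes are the new generators, and the dotted hidden extensions produce all three relations at once. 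You instead route the key relation through Theorem~\ref{t:main-reduction}. Your identification of $b_i$ with the bottom class of the relevant odd Adams cover of $\bo$ (so that $\eta b_i = 0$ and $\mu b_i$ is divisible by $4$) is correct, as is the computation that $\mu(b_i\otimes b_i)$ is $4$ times the generator $\alpha_{2^{i+5}-4}$ of the cover $\bo^{2^{i+2}-2}$ sitting inside $\bo\sm B(2^i)\sm B(2^i)$. But Theorem~\ref{t:main-reduction} only asserts that the image of $\alpha_{2^{i+5}-4}$ under $(\mathfrak{m}_i)_\sharp$ is \emph{divisible} by $2$; it does not identify that image as exactly twice a generator of the target $\ZZ$-tower. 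Your argument therefore gives $\mu b_i^2 \in 8\,\pi_{2^{i+5}-4}(\bo\sm\tmf)$ but not the equality $\mu b_i^2 = 8b_{i+1}$ --- in particular it does not exclude $\mu b_i^2 = 16 b_{i+1}$ or $0$ --- and it likewise does not by itself show that $b_{i+1}$ and $\mu_{i+1}$ are indecomposable. Closing these points, and the exhaustiveness of the relations that you defer to a Poincar\'e-series count, is precisely what the cofibre-sequence chart computation of Figure~\ref{f:EB2} accomplishes. Everything else --- the additive splitting into covers, the collection of the $\es\FF_2$ contributions into $F$, the definition of $\mu_i$, and $\eta b_i = 0$ --- is sound as you state it.
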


\begin{proof}
Theorem~\ref{t:pairing} gives homotopy equivalences
\[
\bo \sm B(n) \sm B(2^i) \to K \vee (\bo \sm B(n + 2^i)) 
\]
for all $n <  2^i$.  In particular, the induced pairings  
\[
\pi_* (\bo \sm B(n)) \otimes \pi_* ( \bo \sm B(2^i)) \to \pi_* (\bo \sm B(n+2^i))
\]
provide an isomorphism for all $n < 2^i$, modulo possible order 2 elements in Adams filtration zero corresponding to free $\SA(1)$ inside $H^* \tmf$.  Therefore, the homotopy classes inside $\bo$, $\sus{8}\bo$ and $\bo \sm B(2^i)$ for $i \geq 0$ generate the homotopy of $\pi_*(\bo \sm \tmf)$.  Hence, it suffices to examine the pairings 
\[
\bo \sm B(2^i) \sm B(2^i) \to \bo \sm B(2^{i+1}).
\]
\begin{figure}[h]
\[
\sseqxstart=12
\sseqystart=0
\sseqxstep=8
\sseqystep=4
\def\sseqgridstyle{\ssgridcrossword}
\def\sseqpacking{\sspackhorizontal}
\sseqentrysize=.4cm
\hfil
\begin{sseq}{26}{8}
\ssmoveto{12}{7} \ssbullstring{0}{-1}{8} \ssdroplabel[R]{b_0}
\ssmoveto{16}{7} \ssbullstring{0}{-1}{7} \ssdroplabel[R]{\mu_0} \e \e
\ssmoveto{20}{7} \ssbullstring{0}{-1}{4} 
\ssmoveto{24}{7} \ssbullstring{0}{-1}{3} \e \e
\end{sseq}
\hfil
\]
\caption{$\EE_{\SA(1)}^{s,t}(H^* B(1),\FF_2 )$}
\label{f:EB1}
\end{figure}
Figure~\ref{f:EB1} depicts the $E_2$-term of the Adams spectral sequence converging to $\bo_* B(1)$ along with its generators as a $\bo_*$-module.  With these generators, we can determine the decomposibles inside $\bo_* B(2^i)$.  Indeed, Lemma~\ref{l:main-fiber} provides us with a fiber sequence
\begin{equation}\label{e:decomp}
\bo \sm B(2^i) \sm B(2^i) \to \bo \sm B(2^{i+1}) \to \bo \sm \sus{2^{i+5}-4} \MM \sm \BHZ{1} 
\end{equation}
inducing a long exact sequence of $\EE$-groups.  Figure~\ref{f:EB2} shows how to use (\ref{e:decomp}) to form the $E_2$-page of $\bo \sm B(2^{i+1})$.  The arrows represent subsequent differentials and the dotted lines non-trivial extensions.  The classes in black are those contributed by $\bo \sm B(2^i) \sm B(2^i)$, i.e., the decomposible classes hit by multiplication by elements in the summand $\bo \sm B(2^i)$.  Those in red (or grey) are contributed by $ \bo \sm\sus{2^{i+5}-4} \MM \sm \BHZ{1}$.  Denote by $b_{i+1}$ the class found in bidegree $(2^{i+5}-4,0)$ and $\mu_{i+1}$ the class in $(2^{i+4}, 1)$.  These two elements are thus indecomposible in the ring $\pi_* ( \bo \sm \tmf)$.

Note that the class in bidegree $(2^{i+5}-8, 0)$ corresponds to the element $b_i^2$.  In particular, $\mu b_i^2 = 8 b_{i+1}$.  Also note that $\mu b_{i+1} = 4 \mu_{i+1}$ and $\eta b_{i+1} = 0$.

\begin{figure}[h]
\[
\sseqxstart=0
\sseqystart=0
\sseqxstep=0
\sseqystep=0
\def\sseqgridstyle{\ssgridcrossword}
\def\sseqpacking{\sspackhorizontal}
\sseqentrysize=.4cm
\hfil
\begin{sseq}{26}{8}
\ssmoveto{0}{0} \ssbullstring{0}{1}{8}
\ssmoveto{4}{0} \ssbullstring{0}{1}{8}
\ssmoveto{10}{0} \ssdropbull \ssdroplabel[R]{b_i^2} \bock \bock \bock \bock \bock \bock \bock
\ssmoveto{12}{0} \ssdropbull
\ssmoveto{14}{7} \ssbullstring{0}{-1}{7} \ssname{A1} 
\ssmoveto{18}{7} \ssbullstring{0}{-1}{6} \ssname{A2} \e \ssname{A3} \e \ssname{A4}
\ssmoveto{22}{7} \ssbullstring{0}{-1}{3} \ssname{A5}
\renewcommand{\ssconncolor}{red} 
\renewcommand{\ssplacecolor}{red}
\ssmoveto{14}{0} \ssdropbull \ssdroplabel[R]{b_{i+1}}\ssname{B1}
\ssmoveto{18}{1} \ssdropbull \ssdroplabel[R]{\mu_{i+1}}\ssname{B2} \ssline{1}{1} \ssdropbull \ssline{1}{1} \ssdropbull \ssline{0}{-1} \ssdropbull \ssname{B3} \ssline{1}{1} \ssdropbull \ssname{B4} \ssline{1}{1} \ssdropbull \ssname{B5}
\renewcommand{\ssconncolor}{black}
\renewcommand{\ssplacecolor}{black}
\ssgoto{B1} \ssgoto{A1} \ssdashedstroke
\ssgoto{B2} \ssgoto{A2} \ssdashedstroke
\ssgoto{B5} \ssgoto{A5} \ssdashedstroke
\renewcommand{\ssconncolor}{blue}
\renewcommand{\ssplacecolor}{blue}
\ssgoto{B3} \ssgoto{A3} \ssstroke \ssarrowhead
\ssgoto{B4} \ssgoto{A4} \ssstroke \ssarrowhead
\end{sseq}
\hfil
\]
\caption{$\EE_{\SA(1)}^{s,t}(H^* B(2^{i+1}),\FF_2 )$}
\label{f:EB2}
\end{figure}
\end{proof}

\begin{remark}\label{r:bocotmf}
The splitting of $\bo \sm \tmf$ can also be used to give a description of the $\bo$-cohomology of $\tmf$.  Indeed, Lemma~\ref{l:boequiv} gives that $[\tmf , \bo] = [\bo \sm \tmf, \bo]_\bo$.  Since Theorem~\ref{t:main-theorem} provides a splitting as $\bo$-module spectra, one has the following chain of equivalences of $\bo^*$-comodules:
\begin{align*}
\bo^* \tmf &= [\tmf, \bo] \\
&= [\bo \sm \tmf, \bo]_{\bo} \\
&= \left[\bigvee_{m,n \geq 0} \sus{8n} \bo \sm B(m) , \bo \right]_\bo \\
&= \left[\bigvee_{m,n \geq 0}\sus{8n} B(m) , \bo \right] \\
&= \bigoplus_{m,n \geq 0} \sus{-8n}\bo^* B(m) 
\end{align*}
A complete description of the summands $\bo^* B(m)$ is given by Carlsson~\cite{Carls76}.  The comultiplication on $\bo^* \tmf$ is once again induced by the pairings of integral Brown-Gitler spectra.  It would be interesting to determine the explicit generators and relations as a $\bo^*$-coalgebra.
\end{remark}

\bibliographystyle{amsplain}
\bibliography{paper-list}

\providecommand{\bysame}{\leavevmode\hbox to3em{\hrulefill}\thinspace}
\providecommand{\MR}{\relax\ifhmode\unskip\space\fi MR }
\providecommand{\MRhref}[2]{%
  \href{http://www.ams.org/mathscinet-getitem?mr=#1}{#2}
}
\providecommand{\href}[2]{#2}
\begin{thebibliography}{1}

\bibitem{Carls76}
Gunnar Carlsson, \emph{Operations in connective {K}-theory and associated
  cohomology theories}, Ph.D. thesis, Stanford, 1976.

\bibitem{CDGM88}
Fred~R. Cohen, Donald~M. Davis, Paul~G. Goerss, and Mark~E. Mahowald,
  \emph{Integral {B}rown-{G}itler spectra}, Proc. Amer. Math. Soc. \textbf{103}
  (1988), no.~4, 1299--1304.

\bibitem{Davis83}
Donald~M. Davis, \emph{The splitting of {$B{\rm O}\langle 8\rangle \wedge b{\rm
  o}$} and {$M{\rm O}\langle 8\rangle \wedge b{\rm o}$}}, Trans. Amer. Math.
  Soc. \textbf{276} (1983), no.~2, 671--683.

\bibitem{DGM81}
Donald~M. Davis, Sam Gitler, and Mark Mahowald, \emph{The stable geometric
  dimension of vector bundles over real projective spaces}, Trans. Amer. Math.
  Soc. \textbf{268} (1981), no.~1, 39--61.

\bibitem{GMJ86}
Paul~G. Goerss, John D.~S. Jones, and Mark~E. Mahowald, \emph{Some generalized
  {B}rown-{G}itler spectra}, Trans. Amer. Math. Soc. \textbf{294} (1986),
  no.~1, 113--132.

\bibitem{Mah81}
Mark Mahowald, \emph{$b\rm{o}$-resolutions}, Pacific Journal of Mathematics
  \textbf{92} (1981), no.~2, 365--383.

\bibitem{May77}
J.P. May, \emph{Infinite loop space theory}, Bull. Amer. Math. Soc. \textbf{83}
  (1977), no.~4, 456--494.

\bibitem{Rezk512}
Charles Rezk, \emph{Supplementary notes for {M}ath 512 (ver. 0.18)},
  http://www.math.uiuc.edu/$\sim$rezk/512-spr2001-notes.pdf, July 2007.

\bibitem{Shim84}
Don~H. Shimamoto, \emph{An integral version of the {B}rown-{G}itler spectrum},
  Trans. Amer. Math. Soc. \textbf{283} (1984), no.~2, 383--421.

\end{thebibliography}

\end{document}